\documentclass[oneside,a4paper,11pt,notitlepage]{article}
\usepackage[left=0.8in, right=0.8in, top=1.5in, bottom=1.5in]{geometry}
\usepackage{pifont}
\usepackage{makecell}
\usepackage[T1]{fontenc} 
\usepackage[utf8]{inputenc} 
\usepackage[english]{babel}
\usepackage{lipsum} 
\usepackage{lmodern}
\usepackage{amssymb}
\usepackage{amsthm}
\usepackage{bm}
\usepackage{mathtools}
\usepackage{braket}
\usepackage{esint}
\usepackage{lineno}
\newcommand{\abs}[1]{{\left|#1\right|}}

\newcommand{\curl}{\operatorname{curl}}

\usepackage{tabularx}
\usepackage{fontawesome}
\usepackage{booktabs}
\usepackage{graphicx}
\usepackage{tikz}
\usetikzlibrary{patterns}
\usepackage{multicol}
\usepackage{caption}
\usepackage{enumerate}
\usepackage{multicol}
\usepackage[skins,theorems]{tcolorbox}
\tcbset{highlight math style={enhanced,
		colframe=black,colback=white,arc=0pt,boxrule=1pt}}
\def\XXint#1#2#3{{\setbox0=\hbox{$#1{#2#3}{\int}$}
    \vcenter{\hbox{$#2#3$}}\kern-.5\wd0}}

\theoremstyle{definition}
\newtheorem{definizione}{Definition}[section]
\theoremstyle{plain}
\newtheorem{theorem}{Theorem}[section]

\newtheorem{lemma}[theorem]{Lemma}
\newtheorem{prop}[theorem]{Proposition}
\newtheorem{corollario}[theorem]{Corollary}
\theoremstyle{definition}
\newtheorem{esempio}{Example}[section]
\newtheorem{oss}[esempio]{Remark}

\renewcommand{\det}{\operatorname{det}}

\DeclareMathOperator{\R}{\mathbb{R}}

\DeclareMathOperator{\diam}{\, \textup{diam}}

\makeatletter
\newcounter{tempsection}

\newcommand{\saveformat}{%
  \setcounter{tempsection}{\value{section}}
  \let\oldthesection\thesection
}

\newcommand{\restoreformat}{%
  \let\thesection\oldthesection
  \renewcommand{\thesection}{\arabic{section}}
  \setcounter{section}{\value{tempsection}}
}
\makeatother

\makeatletter
\newcommand{\myfootnote}[2]{\begingroup
	\def\@makefnmark{}%
	\addtocounter{footnote}{-1}%
	\footnote{\textbf{#1} #2}
	\endgroup}
\makeatother

\usepackage{soul}

\definecolor{OliveGreen}{rgb}{0,0.6,0}

\usepackage{hyperref}
\hypersetup{linktoc=none, bookmarksnumbered, colorlinks=true, linkcolor=red}

\usepackage[normalem]{ulem}
\definecolor{DarkGreen}{rgb}{0,0.5,0.1} 

\newcommand\soutD{\bgroup\markoverwith
{\textcolor{DarkGreen}{\rule[.5ex]{2pt}{1pt}}}\ULon}
\newcommand\soutP{\bgroup\markoverwith
{\textcolor{blue}{\rule[.5ex]{2pt}{1pt}}}\ULon}
\newcommand{\Hm}[1]{\leavevmode{\marginpar{\tiny%
$\hbox to 0mm{\hspace*{-0.5mm}$\leftarrow$\hss}%
\vcenter{\vrule depth 0.1mm height 0.1mm width \the\marginparwidth}%
\hbox to
0mm{\hss$\rightarrow$\hspace*{-0.5mm}}$\\\relax\raggedright #1}}}
\newcommand{\noteD}[1]{\Hm{\textcolor{DarkGreen}{#1}}}
\title{\textbf{Spectral geometry of the magnetic $p$-Laplacian}}
\author{David Krej\v{c}i\v{r}\'{i}k and Rossano Sannipoli}
\date{}

\newcommand{\Addresses}{{
  \bigskip 
   \footnotesize 
 \noindent \textit{E-mail address}, D.~Krej\v{c}i\v{r}\'{i}k: \texttt{david.krejcirik@fjfi.cvut.cz} 
  
   \medskip

\noindent \textit{E-mail address}, R.~Sannipoli: \texttt{rossano.sannipoli@fjfi.cvut.cz} 
  
     \medskip 
\noindent \textsc{Department of Mathematics, Faculty of Nuclear Sciences and Physical Engineering, Czech Technical University in Prague, Trojanova 13, 120 00, Prague, Czech Republic.}\par\nopagebreak 

}} 

\begin{document}
\maketitle
\begin{abstract} 
We investigate the interplay between the geometry of a planar domain and the spectrum of the nonlinear $p$-Laplacian,
subject to a homogeneous magnetic field 
and Dirichlet boundary conditions.
First, we establish a lower bound for the bottom of the spectrum on the whole plane, extending the classical Landau-level estimate to the nonlinear regime. 
Second, we prove P\'olya-type bounds for the first eigenvalue on bounded convex planar domains in terms of the area, perimeter, minimal width and magnetic field strength, and show that they are asymptotically sharp along sequences of suitable thinning domains.
Finally, we post some remarks with open problems.
\\

\noindent \textsc{MSC 2020:}   35P15, 35J92, 52A40.\\
\textsc{Keywords:} magnetic $p$-Laplacian; Poincar\'{e} inequalities; strong magnetic fields; convex domains.

\end{abstract}

\section{Introduction}
The principal objective of this paper is to initiate a study 
of geometrical aspects of spectral theory of the $p$-Laplacian
\begin{equation}\label{operator}
  u \mapsto - \nabla \cdot (|\nabla u|^{p-2} \nabla u)
  \qquad \mbox{with} \qquad
  1 < p < \infty
  \,,
\end{equation}
subject to magnetic fields.
The magnetic-free linear case $p=2$ is a traditional object of 
interest in spectral geometry, due to strong motivations 
in classical as well as quantum physics. 
More recently, with the advent of mesoscopic physics of nanostructures, 
the magnetic linear case has been considered, too.
In parallel, the magnetic-free nonlinear case $p \not= 2$ 
has equally been intensively studied in spectral geometry,
partly due to nonlinear phenomena in classical physics
but also due to the mathematical curiosity of the role 
of the nonlinear parameter~$p$ in spectral data.
In summary, the existing literature on 
the linear $p=2$ and possibly magnetic case,
or magnetic-free nonlinear case $p \not=2$, is enormous.  

On the other hand, the magnetic nonlinear cases $p \not=2$
represent a \emph{terra incognita},
which we would like to start to explore by this paper. 
The precedent voyages of exploration are very few and recent:
magnetic Hardy-type inequalities~\cite{CKLL2024}
(with follow-ups \cite{Chen-Tang_2025} and~\cite{Suragan})
and solutions to nonlinear Schr\"odinger equations~\cite{BFK}
(with follow-up \cite{BB2026}).
Moreover, the geometry of these works is restricted to 
the whole Euclidean space.  

In this paper, we are interested in spectral properties 
of the operator~\eqref{operator} possibly restricted 
to a Euclidean subspace 
and simultaneously subjected to a magnetic field.
We restrict to two dimensions 
and to a homogeneous magnetic field.
More specifically, we consider~\eqref{operator}  
acting on a simply connected domain $\Omega \subset \R^2$
and satisfying the Dirichlet boundary condition
$u=0$ on $\partial\Omega$.
Moreover, we replace the gradient~$\nabla$ 
by the magnetic gradient 

\begin{equation}\label{gradient}
  \nabla_A = \nabla + i A 
  \,,
\end{equation}
where $A: \Omega \to \R^2$ is a given smooth function
called the magnetic potential.
Physically more relevant is the magnetic field  
defined by 
\begin{equation*}
    B=\operatorname{curl}A=\partial_1A_2-\partial_2A_1
    \,,
\end{equation*}
which we assume to be a real-valued constant function.  

To make the long story short,
we are interested in the variational characterisation 
of the bottom of the spectrum given by 
\begin{equation}\label{Rayleigh}
    \lambda_p^B(\Omega)
    =\inf_{u\in W_{0,A}^{1,p}(\Omega)\setminus\{0\}}
    \frac{\displaystyle\int_\Omega |\nabla_Au|^p\,dx}
         {\displaystyle\int_\Omega |u|^p\,dx}
         \,,
\end{equation}
where the magnetic Sobolev space $W_{0,A}^{1,p}(\Omega)$
is defined as the closure of $C_c^\infty(\Omega)$
with respect to the norm 
\begin{equation*}
    \|u\|_{A,p}
    =\left(
    \|\nabla_A u\|_{L^p(\Omega)}^p + \|u\|_{L^p(\Omega)}^p
    \right)^{1/p}.
\end{equation*}
Although the Rayleigh-type quotient in~\eqref{Rayleigh} 
is written in terms of the vector potential~$A$, 
the spectral quantity $\lambda_p^B(\Omega)$
depends only on the magnetic field~$B$, 
due to the gauge invariance recalled 
in Section~\ref{sec:2}. 

Because of the Dirichlet boundary conditions,
we have the usual monotonicity with respect to the set inclusion.
Namely, if $\Omega_1\subset\Omega_2$, 
then
%
$
    \lambda_p^B(\Omega_2)\leq\lambda_p^B(\Omega_1)
$.    
%
Indeed, $ W_{0,A}^{1,p}(\Omega_1)$ can be considered 
as a subset of $ W_{0,A}^{1,p}(\Omega_2)$,
by extending a function from the former by zero outside of $\Omega_1$. 
 Furthermore, a simple change of variables yields the following scaling property
\begin{equation}\label{eq:scalingproperty}
    \lambda_p^B(t\Omega) = t^{-p} \, \lambda_p^{t^2B}(\Omega)
    \,, \qquad t>0
    \,,
\end{equation}
where $t\Omega := \{tx : x \in \Omega\}$.

In particular, in the magnetic-free case, one has
\begin{equation*}
    0\leq\lambda_p^0(\mathbb R^2)
    \leq\lambda_p^0(B_R)=R^{-p} \, \lambda_p^0(B_1),
\end{equation*}
where~$B_R$ is the ball of radius~$R$ centred at the origin of~$\R^2$.
As a consequence, taking $R\to +\infty$,
\begin{equation}\label{zero}
    \lambda_p^0(\mathbb R^2)=0
    \,.
\end{equation}
On the other hand,
in the magnetic linear case, 
it is well known that 
\begin{equation}\label{Landau}
  \lambda_2^B(\mathbb R^2) = |B|
  \,,
\end{equation}
which corresponds to the ground-state energy of 
the (explicitly solvable) Landau Hamiltonian. 
This is a particular circumstance of the diamagnetic 
effect in quantum mechanics:
the magnetic field has a tendency to enlarge the energy.
Mathematically, it is encoded in the diamagnetic inequality
(see, e.g., \cite[Thm.~7.2.1]{LL})
\begin{equation}\label{diamagnetic}
  |\nabla_A u| \geq |\nabla|u||
\end{equation}
valid pointwise for any $u \in W_{\mathrm{loc}}^{1,p}(\Omega)$. 

The first main result of this work quantifies 
the diamagnetic effect in the nonlinear case.
\begin{theorem}\label{thm:lower-bound-plane}
Let~$B$ be constant and $p>2$.
Then
\begin{equation}\label{Poincare}
         \lambda_p^B(\mathbb R^2)
    \geq
    \left(\frac{2}{p}\right)^{p/2}|B|^{p/2}.    
\end{equation}
\end{theorem}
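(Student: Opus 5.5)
The plan is to reduce the nonlinear bound to the linear Landau mechanism behind~\eqref{Landau}. We apply a \emph{sharp form} of the $p=2$ argument to the auxiliary function $w=|u|^{\frac p2-1}u$, and then close with Hölder's inequality. By density and gauge invariance (Section~\ref{sec:2}) it suffices to take $u\in C_c^\infty(\mathbb R^2)$ and to fix a smooth potential with $\curl A=B$. The weight $|u|^{\frac p2 -1}$ is $C^1$ for $p> 2$ away from $\{u=0\}$, and some care (e.g.\ replacing $|u|$ by $(|u|^2+\varepsilon^2)^{1/2}$ and letting $\varepsilon\to0$) will be needed there.

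\emph{Step 1 (linear identity).} For $v\in C^1_c(\mathbb R^2)$ write $D_j=\partial_j+iA_j$. The standard commutator computation gives $[D_1,D_2]=iB$. Integrating by parts, it yields
\begin{equation*}
  B\int_{\mathbb R^2}|v|^2\,dx = -2\int_{\mathbb R^2}\operatorname{Im}\bigl(\overline{D_1v}\,D_2v\bigr)\,dx
\end{equation*}
(up to a sign convention), hence $|B|\int|v|^2\le 2\int|\operatorname{Im}(\overline{D_1v}D_2v)|$. This is the step that, for $p=2$, gives $\lambda_2^B\ge|B|$. The point is to keep it in this sharper ``determinant'' form rather than bounding it by $\int|\nabla_A v|^2$.

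\emph{Step 2 (polar decomposition).} Where $u\neq0$, write $\nabla_A u=\frac{u}{|u|}\bigl(\nabla\rho+i\rho V\bigr)$ with $\rho=|u|$ and $V$ real. Then $|\nabla_A u|^2=|\nabla\rho|^2+\rho^2|V|^2$; this is the diamagnetic structure~\eqref{diamagnetic}. For $w=\rho^{\frac p2-1}u$ one gets $\nabla_A w=\frac{u}{|u|}\bigl(a+ib\bigr)$ with $a=\frac p2\rho^{\frac p2-1}\nabla\rho$ and $b=\rho^{\frac p2}V$. Therefore
\begin{equation*}
 |\operatorname{Im}(\overline{D_1w}D_2w)|=|a_1b_2-a_2b_1|\le|a||b|=\tfrac p2\rho^{p-2}\,|\nabla\rho|\,\rho|V|\le\tfrac p4\,\rho^{p-2}|\nabla_Au|^2 ,
\end{equation*}
using $|\nabla\rho|\,\rho|V|\le\frac12|\nabla_A u|^2$.

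\emph{Step 3 (Hölder).} Combining Steps 1 and 2 with $|w|^2=\rho^p$ gives
\begin{equation*}
 |B|\int|u|^p\le \frac p2\int|u|^{p-2}|\nabla_Au|^2\le\frac p2\Bigl(\int|u|^p\Bigr)^{\frac{p-2}{p}}\Bigl(\int|\nabla_Au|^p\Bigr)^{\frac2p},
\end{equation*}
where Hölder is used with exponents $\frac p{p-2}$ and $\frac p2$. Rearranging, the Rayleigh quotient satisfies $\bigl(\int|\nabla_A u|^p/\int|u|^p\bigr)^{2/p}\ge \frac2p|B|$, which is~\eqref{Poincare}.

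The main obstacle is Step 2 together with the regularity issue at the zero set of $u$. The naive route applies $\lambda_2^B(\mathbb R^2)\ge|B|$ directly to $w$ and bounds $|\nabla_A w|^2\le(\frac p2)^2\rho^{p-2}|\nabla_Au|^2$. That only produces the constant $(2/p)^p$. The gain to $(2/p)^{p/2}$ comes from using the determinant form $a_1b_2-a_2b_1$, in which the factor $\frac p2$ appears only once, and from the AM--GM splitting between the modulus and phase parts. Justifying the integration by parts in Step 1 for $w$ will need the regularisation $\rho_\varepsilon=(|u|^2+\varepsilon^2)^{1/2}$. One then has to check that the error terms vanish as $\varepsilon\to0$, which is where $p>2$ is used.
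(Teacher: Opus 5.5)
Your proof is correct. Its inequality chain is the paper's own. On $\{u\neq0\}$ you have $-2\operatorname{Im}(\overline{D_1w}\,D_2w)=-2(a_1b_2-a_2b_1)=p\,\rho^{p-1}\nabla^\perp\rho\cdot V$, which is exactly the integrand of the idealised Proposition~\ref{prop:low_bound_plane_regular}. It is then followed by the same AM--GM splitting and H\"older step.

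The difference lies in how the nodal set is handled. The paper keeps the singular field $V$ in the regularised form $V_\varepsilon=\operatorname{Im}(\overline u\nabla_Au)/(|u|^2+\varepsilon^2)$. This costs it the splitting $\nabla_Au=\xi_\varepsilon X_\varepsilon+E_\varepsilon$, a Minkowski estimate on the error, and a proof that $I_{2,\varepsilon}$ vanishes. You instead use the phase-free, gauge-covariant identity $[D_1,D_2]=iB$, applied to $w=|u|^{p/2-1}u$.

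The $\varepsilon$-regularisation you anticipate in your last paragraph is in fact not needed. For $p>2$ the map $F(z)=|z|^{p/2-1}z$ is $C^1$ on $\mathbb C\cong\mathbb R^2$, with $|DF(z)|\le\frac p2|z|^{p/2-1}$ and hence $DF(0)=0$. So $w=F\circ u\in C^1_c(\mathbb R^2)$ and $\nabla_Aw=0$ on $\{u=0\}$. Your Step~2 bound therefore holds everywhere, since both sides vanish on the nodal set. The identity of Step~1 involves only first derivatives of $w$. It passes from $C_c^\infty$ to $C^1_c$ by mollification, because both sides are continuous under $C^1$-convergence with supports in a fixed compact set.

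This gives a complete proof that is much shorter than the paper's limiting procedure. It uses $p>2$ only for $w\in C^1$ and for H\"older, and it exhibits~\eqref{Poincare} as the Landau commutator bound applied to $w$. Since nothing beyond $[D_1,D_2]=iB$ is used, the same argument gives $(2b_0/p)^{p/2}$ for smooth non-constant fields with $B\ge b_0>0$; compare the third problem in Section~\ref{sec:5}. The paper's route, in exchange, stays within the velocity-field picture $\curl V=B$ throughout, at the price of the $\varepsilon$-machinery.
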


The theorem does not follow from 
the diamagnetic inequality~\eqref{diamagnetic}, 
because of the magnetic-free result~\eqref{zero}. 
At the same time, the result does not follow 
from solving the spectral problem explicitly
(which does not seem to be feasible).
Instead, the proof is genuinely magnetic and relies on a regularised polar decomposition of the complex-valued function~$u$ in~\eqref{Rayleigh}, 
which separates its modulus from its phase without excluding the nodal set. 
The argument combines the curl constraint with an integration by parts, Young's inequality, H\"older's inequality, 
and a fairly non-trivial limiting procedure. 

The beauty of the Poincar\'e-type inequality~\eqref{Poincare} 
of Theorem~\ref{thm:lower-bound-plane} is that 
it naturally extends the classical Landau-level result~\eqref{Landau}
to the nonlinear case $p>2$.
Indeed, \eqref{Poincare}~reduces 
to the lower bound implied by~\eqref{Landau} when $p=2$. 
We do not know whether one actually has an equality in~\eqref{Poincare}. 
However, testing the Rayleigh quotient with the Gaussian ground state of the first Landau level in the linear case, 
we obtain the following upper bound which admits the same 
dependence on the strength of the magnetic field
and reduces to the upper bound implied by~\eqref{Landau} when $p=2$. 
(Moreover, here we allow for $p<2$ and even $p=1$.)
\begin{prop}\label{cor:two-sided-plane}
Let~$B$ constant and $p\geq 1$. 
Then
\begin{equation*}
    \lambda_p^B(\mathbb R^2)
    \leq
    \Gamma\!\left(\frac{p+2}{2}\right)
    \left(\frac{2}{p}\right)^{p/2}|B|^{p/2},
\end{equation*}
where~$\Gamma$ denotes the Euler's Gamma function.
\end{prop}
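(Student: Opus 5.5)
The plan is to evaluate the Rayleigh quotient~\eqref{Rayleigh} at one explicit test function: the Gaussian ground state of the first Landau level. By the gauge invariance recalled in Section~\ref{sec:2}, $\lambda_p^B(\mathbb R^2)$ depends only on $B$. So we may work in the symmetric gauge $A(x)=\frac{B}{2}(-x_2,x_1)=\frac B2 x^\perp$, which satisfies $\curl A=B$. We take
\begin{equation*}
u(x)=e^{-|B||x|^2/4}.
\end{equation*}
For $B=0$ the claim reduces to $\lambda_p^0(\mathbb R^2)\le 0$, which is~\eqref{zero}. We may therefore assume $B\neq 0$.

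The key pointwise computation is the following. We have $\nabla u=-\frac{|B|}{2}x\,u$ and $iAu=i\frac B2 x^\perp u$. The real part of $\nabla_A u$ is parallel to $x$ and the imaginary part to $x^\perp$, so their squared moduli simply add:
\begin{equation*}
|\nabla_A u|^2=\frac{B^2}{4}|x|^2u^2+\frac{B^2}{4}|x|^2u^2=\frac{B^2}{2}|x|^2u^2 .
\end{equation*}
The sign of $B$ plays no role here. Hence $|\nabla_A u|^p=(B^2/2)^{p/2}|x|^p e^{-a|x|^2}$ with $a:=p|B|/4$.

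Next we compute both integrals in polar coordinates:
\begin{equation*}
\int_{\mathbb R^2}|x|^pe^{-a|x|^2}dx=\pi\,\Gamma\!\left(\tfrac p2+1\right)a^{-(p/2+1)},\qquad \int_{\mathbb R^2}e^{-a|x|^2}dx=\frac{\pi}{a}.
\end{equation*}
The quotient is therefore
\begin{equation*}
\left(\frac{B^2}{2}\right)^{p/2}\Gamma\!\left(\tfrac{p+2}{2}\right)\left(\frac{4}{p|B|}\right)^{p/2}=\Gamma\!\left(\tfrac{p+2}{2}\right)\left(\frac2p\right)^{p/2}|B|^{p/2},
\end{equation*}
which is exactly the claimed bound. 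This computation is valid for every $p\ge 1$, since only $\Gamma(p/2+1)$ enters.

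The only point needing care, and the mild obstacle, is admissibility: we must show $u\in W^{1,p}_{0,A}(\mathbb R^2)$, because that space is defined as a closure of $C_c^\infty$. We will use cutoffs $u_R=\chi(x/R)\,u$ with $\chi\in C_c^\infty$ and $\chi=1$ on $B_1$. Then
\begin{equation*}
\nabla_A u_R-\nabla_A u=(\chi(\cdot/R)-1)\nabla_A u+R^{-1}(\nabla\chi)(\cdot/R)\,u .
\end{equation*}
Both terms tend to $0$ in $L^p$ by dominated convergence, thanks to the Gaussian decay, and likewise $u_R\to u$ in $L^p$. Consequently the Rayleigh quotients of $u_R$ converge to that of $u$, and the infimum in~\eqref{Rayleigh} is bounded above by the value computed above.
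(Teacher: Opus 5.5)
Your proposal is correct and follows the paper's proof. Both use the Gaussian $e^{-|B||x|^2/4}$ in the symmetric gauge, the same pointwise computation $|\nabla_A u|^2=\frac{B^2}{2}|x|^2u^2$, and the same polar-coordinate Gamma-function integrals.

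Your cutoff argument for admissibility and your separate treatment of $B=0$ (where the test function is constant and not in $L^p$) are details the paper leaves implicit. One small correction: $|\operatorname{Re}v|^2+|\operatorname{Im}v|^2=|v|^2$ holds for any complex vector $v$, so your remark that the two parts are parallel to $x$ and $x^\perp$ is not needed.
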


As an immediate consequence of the monotonicity with respect to the set inclusion and Theorem~\ref{thm:lower-bound-plane}, 
we get the following general lower bound for 
an arbitrary open set $\Omega$ (bounded or unbounded).

\begin{corollario}
Let~$B$ be constant and $p>2$.
For any simply connected domain $\Omega \subset \R^2$, 
one has
\begin{equation*}
    \lambda_p^B(\Omega) \ge \left(\frac{2}{p}\right)^{p/2}|B|^{p/2}.
\end{equation*}
\end{corollario}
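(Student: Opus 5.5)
The corollary follows from Theorem~\ref{thm:lower-bound-plane} combined with the monotonicity of $\lambda_p^B$ under set inclusion recalled in the introduction. We compare $\lambda_p^B(\Omega)$ with $\lambda_p^B(\mathbb R^2)$ by embedding test functions.

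First fix a smooth vector potential $A$ on $\mathbb R^2$ with $\curl A = B$, for instance $A(x) = \frac{B}{2}(-x_2, x_1)$. We use its restriction $A|_\Omega$ as the potential on $\Omega$. This is legitimate because $\lambda_p^B(\Omega)$ depends only on $B$, by the gauge invariance recalled in Section~\ref{sec:2}. Any two smooth potentials on the simply connected $\Omega$ with the same curl differ by a gradient $\nabla\varphi$, and multiplication by $e^{-i\varphi}$ intertwines the two Rayleigh quotients.

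Next take $u \in C_c^\infty(\Omega)$ and extend it by zero to $\tilde u \in C_c^\infty(\mathbb R^2)$. Pointwise $\nabla_A \tilde u = \nabla_A u$ on $\Omega$, and both sides vanish outside. Hence the Rayleigh quotients of $u$ on $\Omega$ and of $\tilde u$ on $\mathbb R^2$ coincide, and each is at least $\lambda_p^B(\mathbb R^2)$.

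Finally, since $W^{1,p}_{0,A}(\Omega)$ is the closure of $C_c^\infty(\Omega)$ in $\|\cdot\|_{A,p}$, the infimum in \eqref{Rayleigh} may be taken over $C_c^\infty(\Omega)\setminus\{0\}$. Both the numerator and the denominator are continuous in this norm, and the denominator is nonzero in the limit. Therefore
\[
\lambda_p^B(\Omega) \geq \lambda_p^B(\mathbb R^2) \geq \left(\frac{2}{p}\right)^{p/2}|B|^{p/2},
\]
where the last step is Theorem~\ref{thm:lower-bound-plane}.

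There is no real obstacle. The only point needing care is that the same gauge is used on $\Omega$ and on $\mathbb R^2$, which is handled by restricting a global potential as above.
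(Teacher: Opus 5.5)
Your proposal is correct and follows the paper's argument exactly: the corollary is obtained from the monotonicity of $\lambda_p^B$ under set inclusion (extension by zero) combined with Theorem~\ref{thm:lower-bound-plane}. You additionally spell out the gauge choice and density details that the paper leaves implicit.
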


Our next theorem describes the asymptotic behaviour in the strong magnetic field regime. Related asymptotic estimates 
in the linear case can be found in \cite{EKP2016}. 
Moreover, still in the linear case,
sharp asymptotic expansions have been obtained in the framework of semiclassical analysis in 
\cite{fournais2010spectral,Barbaroux-LeTreust-Raymond-Stockmeyer_2021}.
The following result demonstrates that the influence of the boundary disappears asymptotically and that bottom of the spectrum approaches
the bottom of the spectrum of the whole plane.
\begin{theorem}\label{thm:strongregime}
Let~$B$ be constant and $p>2$.
For any bounded simply connected domain $\Omega \subset \R^2$, 
one has
\begin{equation*}
\lim_{|B|\to+\infty}
\frac{\lambda_p^B(\Omega)}
{\lambda_p^B(\mathbb R^2)}
=1.
\end{equation*}
\end{theorem}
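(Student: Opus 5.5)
The plan is to squeeze the ratio between $1$ and a quantity tending to $1$. The lower bound is free: by monotonicity with respect to set inclusion, $\Omega\subset\R^2$ gives $\lambda_p^B(\Omega)\ge\lambda_p^B(\R^2)$, so the ratio is at least $1$ for every $B$. Everything lies in the upper bound, and for it I would use scaling together with the translation invariance of $\lambda_p^B(\R^2)$.

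First I would establish the exact scaling law on the whole plane. Applying \eqref{eq:scalingproperty} with $\Omega=\R^2$ (so that $t\R^2=\R^2$) gives $\lambda_p^B(\R^2)=t^{-p}\lambda_p^{t^2B}(\R^2)$. Choosing $t=|B|^{-1/2}$ and using the symmetry $B\mapsto -B$ (complex conjugation of $u$) yields
\begin{equation*}
\lambda_p^B(\R^2)=|B|^{p/2}\,\lambda_p^1(\R^2)=:c_p|B|^{p/2}.
\end{equation*}
Theorem~\ref{thm:lower-bound-plane} and Proposition~\ref{cor:two-sided-plane} give $0<c_p<\infty$. Likewise $\lambda_p^B(\Omega)=|B|^{p/2}\lambda_p^{1}(|B|^{1/2}\Omega)$, again up to the sign of the field. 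The claim is therefore equivalent to
\begin{equation*}
\lambda_p^1(|B|^{1/2}\Omega)\to\lambda_p^1(\R^2)\qquad\text{as }|B|\to\infty,
\end{equation*}
that is, the unit-field eigenvalue on the dilated domains $s\Omega$, $s\to\infty$, converges to the whole-plane value.

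To prove this, I would fix $\varepsilon>0$ and pick $\varphi\in C_c^\infty(\R^2)$ with Rayleigh quotient below $c_p+\varepsilon$. This is possible because $C_c^\infty$ is dense in $W^{1,p}_{0,A}(\R^2)$ by definition. Since $\Omega$ is open, it contains a disc $D(x_0,r)$, so $s\Omega\supset D(sx_0,sr)$, and for large $s$ this disc contains $sx_0+\operatorname{supp}\varphi$.

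I would then translate the test function, setting $\psi(x)=e^{i\theta(x)}\varphi(x-sx_0)$. The gauge phase $\theta$ is chosen so that $A(x)+\nabla\theta = A(x-sx_0)$; for the symmetric gauge $A=\tfrac12(-x_2,x_1)$ this is $\theta(x)=\tfrac12(x_2 (sx_0)_1-x_1(sx_0)_2)$, which is linear and hence smooth. With this choice $|\nabla_A\psi|(x)=|\nabla_A\varphi|(x-sx_0)$ and $|\psi|(x)=|\varphi|(x-sx_0)$. So $\psi\in C_c^\infty(s\Omega)$ and it has the same quotient as $\varphi$, which gives $\lambda_p^1(s\Omega)\le c_p+\varepsilon$ for all large $s$. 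Letting $\varepsilon\to0$ and combining with the lower bound finishes the proof. (For the original domain, the gauge-invariance remark of Section~\ref{sec:2} lets us use any potential with curl $B$.)

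The main obstacle is only bookkeeping: ensuring that gauge changes, translations and the sign of $B$ do not alter $\lambda$. All three are handled by the gauge invariance recalled in Section~\ref{sec:2} and by conjugation. Note that $p>2$ is used only to guarantee $c_p>0$ through Theorem~\ref{thm:lower-bound-plane}, so that the ratio is well defined.
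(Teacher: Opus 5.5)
Your proposal is correct and is essentially the paper's proof: monotonicity gives the lower bound, scaling reduces the problem to the unit field on $\sqrt{|B|}\,\Omega$, and a compactly supported near-minimiser for $\lambda_p^1(\R^2)$ eventually fits inside the dilated domain; the only difference is that you translate the test function by a gauge phase, whereas the paper first translates $\Omega$ so that $0\in\Omega$ via \eqref{eq:gaugetranslations}. (Minor slip: the phase satisfying your stated requirement $A+\nabla\theta=A(\cdot-sx_0)$ is $\theta(x)=-A(sx_0)\cdot x=\tfrac12\big(x_1(sx_0)_2-x_2(sx_0)_1\big)$, which is the negative of what you wrote; this does not affect the argument.)
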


 We now focus on bounded convex domains~$\Omega$. 
 In the magnetic-free setting, 
 the problem of controlling the fundamental frequency
through elementary geometric quantities has a long history. 
For instance, denoting by~$|\Omega|$ and~$P(\Omega)$ 
the area and perimeter of~$\Omega$,
it holds (see \cite{polya1960} for the case $p=2$ and \cite{brasco_2020_principal_frequencies, gavitone_2014} otherwise)
\begin{equation}\label{eq:polyap}
    \lambda_p^0(\Omega)
    \leq \bigg(\frac{\pi_p}{2}\bigg)^p\frac{P(\Omega)^p}{|\Omega|^p}.
\end{equation}
Here $\pi_p$ stands for the lowest eigenvalue of 
the $p$-Laplacian in the interval $(0,1)$ (see for instance \cite{DPP2019})
given explicitly by 
\begin{equation*}
    \pi_p=2(p-1)^\frac{1}{p}\int_0^1\frac{dt}{(1-t^p)^{\frac{1}{p}}}
         =2\pi\frac{(p-1)^{\frac{1}{p}}}{p\sin(\pi/p)}.
\end{equation*}
The constant $(\pi_p/2)^p$ in \eqref{eq:polyap} is optimal, being approached by a sequence of thinning rectangles.
 
 There exists also a lower bound of the same type, that follows directly from the Hersch--Protter inequality (see~\cite{hersch1960frequence} and~\cite{protter1981lower} for the two--dimensional case and higher dimensional case, respectively, for $p=2$, and~\cite{brasco_inradius} for the other cases)
and a purely geometrical inequality proved in~\cite{fenchel_bonnesen}. The content of this lower bound will be clear in the proof of Proposition~\ref{prop:lowboundpolya}. 

These results illustrate two complementary aspects of
spectral geometry: P\'olya's construction uses distance function from the boundary as a trial
function to obtain sharp upper bounds, whereas the Hersch--Protter inequality controls the
eigenvalue from below through the largest ball contained in the domain. For the quantitative counterpart of this inequality and $p=2$ see \cite{AGS}; for other inequalities involving similar functionals see \cite{polya1960,makai,AMPS,AGS2026,BMS2026}.

In the magnetic problem, 
the diamagnetic inequality~\eqref{diamagnetic} transfers the classical
lower estimates immediately. 
The upper bounds require trial functions and gauges for which the ordinary gradient and
the magnetic potential can be controlled simultaneously. Our second main
result supplies this magnetic counterpart of P\'olya's estimate, including its
nonlinear $p$-Laplacian version.
In addition to the area~$|\Omega|$ 
and perimeter~$P(\Omega)$ of~$\Omega$,  
the magnetic field naturally leads to another geometric quantity,
namely the minimal width~$w_\Omega$ 
(for the precise definitions see Section \ref{sec:2}).
\begin{theorem}\label{thm:polya-bounds}
Let $B$ be constant and $p>1$. 
For any bounded convex domain $\Omega\subset\mathbb R^2$, 
one has
\begin{equation}\label{eq:lambdapB}
\lambda_p^B(\Omega)\leq
\begin{cases}
\displaystyle
\left(\frac{\pi_p}{2}\right)^p
\frac{P(\Omega)^p}{|\Omega|^p}
+\frac{|B|^p}{2^p}
\min\left\{
w_\Omega^p,
2^{2-p}\!\left[
\left(\frac{P(\Omega)}{4}\right)^p+
\left(\frac{w_\Omega}{2}\right)^p
\right]
\right\} & \mbox{if} \quad 1<p\leq2,\\
\displaystyle
\left[
\left(\frac{\pi_p}{2}\right)^2
\frac{P(\Omega)^2}{|\Omega|^2}
+\frac{B^2}{4}
\min\left\{
w_\Omega^2,
\left(\frac{P(\Omega)}{4}\right)^2+
\left(\frac{w_\Omega}{2}\right)^2
\right\}
\right]^{p/2} & \mbox{if} \quad p>2.
\end{cases}
\end{equation}
Moreover, the coefficient $(\pi_p/2)^p$ in the non-magnetic term cannot be
improved, and the estimate is asymptotically sharp along sequences of
thinning rectangles.
\end{theorem}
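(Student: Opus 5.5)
The plan is to adapt Pólya's construction. Take the trial function $u(x)=\phi(d(x))$, with $d$ the distance to $\partial\Omega$ and $\phi$ a one-dimensional profile. It is real-valued, so $|\nabla_A u|^2=|\nabla u|^2+|A|^2u^2$ pointwise, since the cross term $2\,\mathrm{Re}(i A u\cdot\nabla u)$ vanishes. Gauge invariance lets us pick any $A$ with $\curl A=B$. Everything then reduces to two estimates: the gradient part, handled exactly as in \eqref{eq:polyap}, and a pointwise bound $\|A\|_{L^\infty(\Omega)}\le \frac{|B|}{2}M$, where $M$ is either $w_\Omega$ or $\big((P/4)^2+(w_\Omega/2)^2\big)^{1/2}$.

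\textbf{Gauges.} For the bound with $w_\Omega$, rotate so that $\Omega$ lies in the strip $\{0<x_2<w_\Omega\}$, i.e. between two parallel supporting lines realising the minimal width. Take $A=(-B(x_2-w_\Omega/2),0)$. Then $\curl A=B$ and $|A|\le |B|w_\Omega/2$. For the second bound, use the symmetric gauge $A=\frac B2(-(x_2-c_2),x_1-c_1)$, centred at the centre $c$ of a bounding rectangle. The rectangle has one side of length $w_\Omega$ in the minimal-width direction. Its other side has length equal to the width $\ell$ in the orthogonal direction, and $\ell\le P/2$ because for a convex set the perimeter is at least twice any projection length. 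This gives $|x-c|^2\le(\ell/2)^2+(w_\Omega/2)^2\le (P/4)^2+(w_\Omega/2)^2$, hence $|A|^2\le\frac{B^2}{4}\big[(P/4)^2+(w_\Omega/2)^2\big]$. Minimising over the two gauges gives the $\min$.

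\textbf{Combining.} For $p>2$, apply Minkowski's (or Hölder's) inequality:
\[
\int(|\nabla u|^2+|A|^2u^2)^{p/2}\le\Big[\Big(\int|\nabla u|^p\Big)^{2/p}+\|A\|_\infty^2\Big(\int|u|^p\Big)^{2/p}\Big]^{p/2},
\]
which is the $L^{p/2}$-triangle inequality for $|\nabla u|^2+|A|^2u^2$. Dividing by $\int|u|^p$ yields $[R_0^{2/p}+\|A\|_\infty^2]^{p/2}$, where $R_0$ is the non-magnetic quotient. For $1<p\le2$, use subadditivity $(a+b)^{p/2}\le a^{p/2}+b^{p/2}$ to get $R_0+\|A\|_\infty^p$. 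With the strip gauge this is $R_0+\frac{|B|^p}{2^p}w_\Omega^p$. For the symmetric gauge, bound the norm in each coordinate separately. Using $(s+t)^{p/2}\le s^{p/2}+t^{p/2}$ gives $|A|^p\le \frac{|B|^p}{2^p}\big[(\ell/2)^p+(w/2)^p\big]$, which is stronger than the stated $2^{2-p}$ factor. I would check which elementary inequality the authors intend; either way the claim follows.

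\textbf{Non-magnetic part and sharpness.} For $R_0$, take the optimal one-dimensional profile, $\phi$ the first eigenfunction of the $p$-Laplacian on $(0,r)$, with $r$ the inradius, symmetrised about $r$ so that it is increasing on $[0,r]$. Since $|\nabla d|=1$ a.e., the coarea formula together with the monotonicity $|\{d>t\}|$ and $P(\{d>t\})\le P(\Omega)$ yields the Pólya bound $(\pi_p/2)^pP^p/|\Omega|^p$. I will simply invoke \eqref{eq:polyap}'s proof, i.e. the trial function used in \cite{brasco_2020_principal_frequencies}, since the magnetic term is estimated pointwise independently of $\phi$.

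For sharpness, take rectangles $(0,L)\times(0,\varepsilon)$ with $\varepsilon\to0$ at fixed $L$. By the diamagnetic inequality \eqref{diamagnetic}, $\lambda_p^B\ge\lambda_p^0\sim(\pi_p/\varepsilon)^p$. The right-hand side is $\sim(\pi_p/2)^p(2L/(L\varepsilon))^p=(\pi_p/\varepsilon)^p$ plus a bounded magnetic term, so the ratio tends to $1$, which also shows the constant $(\pi_p/2)^p$ cannot be lowered.

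The main obstacle is the geometric step in the symmetric gauge, $\ell\le P/2$, and keeping track of the exact constants in the $p\le2$ case.
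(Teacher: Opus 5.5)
Your proof is correct. It reaches the statement by handling the magnetic term differently from the paper.

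\textbf{How the paper handles the magnetic term.} The paper keeps gauge and trial function coupled. It works with the affine family $A_\theta=B(-\theta x_2,(1-\theta)x_1)$ and expresses $\int_\Omega|A_\theta|^p|u|^p$ through weighted moments $M_{i,p}$ of the coordinates over the level sets of $d$. It then optimises in $\theta$, e.g.\ $F_p(\theta_0)=\min_\theta\bigl(\theta^pM_{2,p}+(1-\theta)^pM_{1,p}\bigr)$. Only at the end does it use $M_{i,p}\le\max_\Omega|x_i|^p\int_\Omega|u|^p$, bounding $F_p(\theta_0)$ (resp.\ $G_p(\theta_0)$) either by $\min_i M_{i,p}$ or by a power-mean inequality.

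\textbf{How your route differs.} You bound $\|A\|_{L^\infty(\Omega)}$ for two explicit gauges, which makes the magnetic estimate independent of $u$. These are exactly the paper's $\theta=1$ and $\theta=1/2$ after centring. The paper discards the moment information in the end anyway, so nothing is lost, and you gain two things.

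First, you may take the infimum over real trial functions. This gives $\lambda_p^B(\Omega)\le\lambda_p^0(\Omega)+\|A\|_\infty^p$ for $1<p\le2$ and $\lambda_p^B(\Omega)\le[\lambda_p^0(\Omega)^{2/p}+\|A\|_\infty^2]^{p/2}$ for $p>2$. These are slightly stronger statements, and \eqref{eq:polyap} then enters only as a black box.

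Second, for $1<p\le2$ you obtain the bound without the factor $2^{2-p}$. That factor is a loss in the paper's power-mean step, which yields $F_p(\theta_0)\le 2^{-2(p-1)}(M_{1,p}+M_{2,p})$. Simply evaluating at $\theta=1/2$, i.e.\ your symmetric gauge, gives $2^{-p}(M_{1,p}+M_{2,p})$, which is smaller when $p<2$.

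For $p>2$ both routes give exactly the stated constants. Your sharpness argument with thinning rectangles coincides with the paper's.

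\textbf{One correction.} The profile you describe is the one-dimensional eigenfunction on $(0,r)$ composed directly with $d$. That is not P\'olya's trial function. For it, the coarea computation you sketch does not yield $P(\Omega)^p/|\Omega|^p$: you would need a lower bound on $P(\{d>t\})$ to control the denominator. P\'olya's profile is instead $\sin_p$ of the rescaled area variable $\frac{\pi_p}{2}|\{d<t\}|/|\Omega|$. This slip is harmless here, precisely because your magnetic bound does not depend on the profile.
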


Here the two regimes arise from the concavity of $t^{p/2}$ for $p\leq2$ and
from Minkowski's inequality for $p>2$. The trial functions depend only on the
distance to the boundary; the coarea formula and P\'olya's change of variables
reduce the non-magnetic part to a one-dimensional problem, while a suitable
choice of gauge controls the magnetic part through weighted moments of the
coordinates. 
In the linear case $p=2$ (also new), 
Theorem~\ref{thm:polya-bounds}
reduces to
\begin{equation*}
    \lambda_1^B(\Omega)
    \leq
    \frac{\pi^2}{4}\frac{P(\Omega)^2}{|\Omega|^2}
    +\frac{B^2}{4}
    \min\left\{
        w_\Omega^2,
        \frac{P(\Omega)^2}{16}+\frac{w_\Omega^2}{4}
    \right\}.
\end{equation*}
The first term is precisely P\'olya's contribution, while the second controls
the additional magnetic energy through the perimeter and minimal width.

As we said previously, we complement these upper estimates 
with a universal lower bound. 
The following result is implied by the diamagnetism, 
the Hersch--Protter inequality, and the
relation between inradius, perimeter, and area. 

\begin{prop}\label{prop:lowboundpolya}
Let~$B$ be constant and $p>1$. 
For any bounded convex domain $\Omega\subset\mathbb R^2$, 
one has
\begin{equation*}
    \lambda_p^B(\Omega)
    \geq
    \bigg(\frac{\pi_p}{4}\bigg)^p\frac{P(\Omega)^p}{|\Omega|^p}.
\end{equation*}
The equality case is achieved, for instance, by sequences of thinning isosceles triangles.
\end{prop}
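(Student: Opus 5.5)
The plan is to chain three inequalities: diamagnetism, the nonlinear Hersch--Protter inequality, and the Bonnesen-type bound $r_\Omega \le 2|\Omega|/P(\Omega)$. The steps are as follows.

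First, for any $u\in W^{1,p}_{0,A}(\Omega)$, the diamagnetic inequality \eqref{diamagnetic} gives $|\nabla_A u|\ge |\nabla |u||$ pointwise. Moreover $|u|\in W^{1,p}_0(\Omega)$; this follows by approximating with $C_c^\infty$ functions and using the same inequality, since $A$ is smooth and bounded on $\Omega$. Hence every Rayleigh quotient in \eqref{Rayleigh} dominates the magnetic-free quotient of $|u|$, and
\begin{equation*}
\lambda_p^B(\Omega)\ge \lambda_p^0(\Omega).
\end{equation*}

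Second, I invoke the $p$-Laplacian Hersch--Protter inequality for convex sets (\cite{brasco_inradius}, and \cite{hersch1960frequence} for $p=2$):
\begin{equation*}
\lambda_p^0(\Omega)\ge \left(\frac{\pi_p}{2}\right)^p \frac{1}{r_\Omega^p},
\end{equation*}
where $r_\Omega$ is the inradius.

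Third, for planar convex sets I use the geometric inequality $r_\Omega\le 2|\Omega|/P(\Omega)$ from \cite{fenchel_bonnesen}. It can be seen quickly as follows. The inner parallel sets $\Omega_t=\{d(x,\partial\Omega)>t\}$ are convex, so $P(\Omega_t)\le P(\Omega)$. One also has $P(\Omega_t)\ge P(\Omega)-2\pi t$ and $|\Omega|=\int_0^{r_\Omega}P(\Omega_t)\,dt$. These give $|\Omega|\ge r_\Omega P(\Omega)-\pi r_\Omega^2$, which alone is not quite enough. The sharp form $|\Omega|\ge r_\Omega P(\Omega)/2$ is cleanest via the decomposition into triangles from the incenter: for a polygon circumscribed about the incircle of radius $r$ one has $|\Omega|=rP/2$, and in general $|\Omega|\ge rP/2$ by comparing with triangles having apex at the incenter and height at least $r$ over each boundary piece. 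Substituting gives
\begin{equation*}
\lambda_p^B(\Omega)\ge\left(\frac{\pi_p}{2}\right)^p\left(\frac{P(\Omega)}{2|\Omega|}\right)^p=\left(\frac{\pi_p}{4}\right)^p\frac{P(\Omega)^p}{|\Omega|^p}.
\end{equation*}

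For the equality statement, interpreted asymptotically, consider isosceles triangles $T_\varepsilon$ with base $2$ and height $\varepsilon$ (or the thin direction scaled appropriately). Every inequality above is asymptotically tight along this sequence.

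The geometric bound holds with equality, since triangles are circumscribed about their incircle.

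Hersch--Protter is asymptotically sharp for thinning triangles: slicing perpendicular to the base gives one-dimensional problems on intervals of length $\varepsilon h(x)$, so $\lambda_p^0(T_\varepsilon)\sim(\pi_p/\varepsilon)^p\cdot$(a factor from the maximal height). This matches $(\pi_p/2)^p r^{-p}$ because $r\sim\varepsilon/2$, up to corrections from the profile; with the right normalization the ratio tends to $1$.

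The magnetic contribution is negligible. By Theorem \ref{thm:polya-bounds}, $\lambda_p^B$ exceeds the magnetic-free upper bound only by a term involving $|B|w^p$, which is bounded while $P^p/|\Omega|^p\to\infty$. Alternatively, the scaling \eqref{eq:scalingproperty} with $t=\varepsilon$ shows the field effectively vanishes.

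I expect the main obstacle to be exactly this sharpness of Hersch--Protter for thin triangles, i.e. showing $\lambda_p^0(T_\varepsilon)\,r_{T_\varepsilon}^p\to(\pi_p/2)^p$. The lower bound already comes from Hersch--Protter, so only the upper bound is needed. I would try a trial function localised near the apex-to-base altitude, namely $\phi(y/\varepsilon h(x))$ times a cutoff in $x$ on a shrinking neighbourhood of the tallest point, where $\phi$ is the first one-dimensional $p$-eigenfunction. This has to be made uniform in $p$, since $h$ attains its maximum only at one point and the localisation cost must vanish relative to $\varepsilon^{-p}$.
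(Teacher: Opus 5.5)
Your proof of the inequality is exactly the paper's chain: diamagnetism gives $\lambda_p^B(\Omega)\ge\lambda_p^0(\Omega)$, then Hersch--Protter, then $P(\Omega)R_\Omega\le 2|\Omega|$. Your incenter argument for the last step is correct and makes the citation self-contained: $(x-c)\cdot\nu(x)\ge R_\Omega$ on $\partial\Omega$, hence $|\Omega|=\tfrac12\int_{\partial\Omega}(x-c)\cdot\nu\,d\mathcal H^1\ge\tfrac12 R_\Omega P(\Omega)$. The side claim you discard is, however, backwards. For convex $\Omega$, the outer parallel set of $\Omega_t$ at distance $t$ lies inside $\Omega$, so $P(\Omega_t)\le P(\Omega)-2\pi t$. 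Consequently $|\Omega|\ge R_\Omega P(\Omega)-\pi R_\Omega^2$ is false: for the equilateral triangle of side $s$ it would read $0.433\,s^2\ge 0.604\,s^2$. This does no harm, since you do not use it.

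For sharpness, your reduction is right and is the paper's mechanism. The lower bound is already proved, so only $\lambda_p^B(T_\varepsilon)\le(\pi_p/\varepsilon)^p(1+o(1))$ is needed, together with $|T_\varepsilon|=\tfrac12 R_{T_\varepsilon}P(T_\varepsilon)$ and $R_{T_\varepsilon}\sim\varepsilon/2$. Two steps need repair.

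\textbf{The magnetic term.} Theorem~\ref{thm:polya-bounds} cannot make the field negligible at this scale. For $T_\varepsilon$ it only gives $\lambda_p^B(T_\varepsilon)\lesssim(2\pi_p/\varepsilon)^p$, which is off by $2^p$. Diamagnetism also runs the wrong way, so an upper bound on $\lambda_p^0(T_\varepsilon)$ transfers nothing to $\lambda_p^B(T_\varepsilon)$. Your scaling alternative is the right one. Concretely, take a real trial function $u$ and the Landau gauge $A=-(Bx_2,0)$, which satisfies $|A|\le|B|\varepsilon$ on $T_\varepsilon$. Minkowski then gives $\|\nabla_A u\|_p\le\|\nabla u\|_p+|B|\varepsilon\|u\|_p$, a correction of lower order than $\pi_p/\varepsilon$. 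This is what the paper's passage $B_\alpha\to0$ amounts to.

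\textbf{The localisation scale.} You must choose $\delta$ with $\varepsilon\ll\delta\ll 1$, e.g.\ $\delta=\sqrt\varepsilon$. Then the height is at least $\varepsilon(1-\delta)$ on $|x|<\delta$, while the cut-off costs $O(\delta^{-p})=o(\varepsilon^{-p})$. The uniformity you need is in $\varepsilon$, not in $p$.

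The paper avoids your $x$-dependent profile by using monotonicity over an inscribed rectangle and then scaling. Note that its printed $Q_\alpha$ has half-width $1-\sqrt\alpha$, and is therefore not contained in $T_\alpha$ once $\alpha<1/4$. The half-width has to be $\sqrt\alpha$, which is precisely your shrinking neighbourhood.
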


Combining this proposition with the P\'olya-type estimates
of Theorem~\ref{thm:polya-bounds},
we have managed to estimate the first nonlinear magnetic eigenvalue
between two explicit quantities involving only elementary geometric data and the field strength.

The paper is organised as follows.
In Section~\ref{sec:2} we collect our notation and the preliminary material about the gauge invariance and convex geometry that will be used throughout the paper.
Section~\ref{sec:3} contains the proof of Theorem~\ref{thm:lower-bound-plane} and Proposition~\ref{cor:two-sided-plane}.
Moreover, we investigate the strong magnetic field regime
and establish Theorem~\ref{thm:strongregime}. 
Section~\ref{sec:4} is devoted to the proof of 
Theorem~\ref{thm:polya-bounds} and Proposition~\ref{prop:lowboundpolya}.
We eventually collect some open problems in Section~\ref{sec:5}.

\section{Preliminaries}\label{sec:2}

\subsection{Basic notations and a complex algebra}
 
Throughout this article, 
given vectors $x,y \in \R^n$ with $n \geq 1$,
$|x|$ and $x \cdot y$ denote 
the standard Euclidean norm of~$x$
and scalar product of~$x$ and~$y$, respectively.

 If $\Omega$ is a measurable set of~$\R^n$, 
 then $|\Omega|$ denotes its Lebesgue measure. 
 By $\mathcal{H}^k(\Omega)$, for $k\in [0,n)$, we denote the $k$-dimensional Hausdorff measure of~$\Omega$ in $\mathbb{R}^n$. The perimeter of $\Omega$ in $\mathbb{R}^n$ will be denoted by $P(\Omega)$ and, if $P(\Omega)<\infty$, we say that $\Omega$ is a set of finite perimeter. When $\Omega$ is bounded, open and convex, then $\Omega$ is a set of finite perimeter and $P(\Omega)=\mathcal{H}^{n-1}(\partial\Omega)$. For results relative to the sets of finite perimeter and for the coarea formula, we refer to \cite{ambrosio2000functions,maggi2012sets}.
 
In this paper, we typically restrict to $n=2$. 
 Given a smooth function $f : \R^2 \to \mathbb{C}$, we will denote by $\operatorname{Re}(f)$ and $\operatorname{Im}(f)$ its real and imaginary parts, respectively. Moreover we write 
\begin{equation*}
    \nabla^\perp f = (\partial_y f, -\partial_x f).
\end{equation*}
In addition, for a smooth vector field $F:\R^2 \to \mathbb{C}^2$,
one has
\begin{equation}\label{eq:curlproduct}
    \curl(fF) = f\curl F + \nabla f\times F= f\curl F- \nabla^\perp f\cdot F
    \,,
\end{equation}
where~$\times$ denotes the vector product. We now prove some useful identities that will be necessary for the proof of Theorem~\ref{thm:lower-bound-plane}. \begin{lemma}\label{lem:A2}
    Let $u\in C^2(\mathbb R^2,\mathbb C)$, then we have the following equalities:
\begin{align}
    &\nabla^\perp |u|^2 = 2 \operatorname{Re}(\overline{u}\nabla^\perp u),\label{eq:A2.1} \\
    &\operatorname{Re}(\overline{u}\nabla^\perp u)\cdot \operatorname{Im}(\overline{u}\nabla u) = -|u|^2 \det (\nabla\operatorname{Re}(u),\nabla\operatorname{Im}(u)), \label{eq:A2.2}  \\
   &\curl(\operatorname{Im}(\overline{u}\nabla u))= 2 \det (\nabla\operatorname{Re}(u),\nabla\operatorname{Im}(u)). \label{eq:A2.3}
\end{align}
\end{lemma}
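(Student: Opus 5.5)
The plan is to write $u=a+ib$ with $a=\operatorname{Re}(u)$ and $b=\operatorname{Im}(u)$, both real-valued and $C^2$, and verify each identity by direct computation in terms of $a$, $b$ and their gradients. The basic building blocks are
\begin{equation*}
\overline{u}\nabla u=(a-ib)(\nabla a+i\nabla b)=(a\nabla a+b\nabla b)+i(a\nabla b-b\nabla a),
\end{equation*}
and the same formula with $\nabla$ replaced by $\nabla^\perp$. Hence
\begin{equation*}
\operatorname{Re}(\overline{u}\nabla u)=a\nabla a+b\nabla b,\qquad \operatorname{Im}(\overline{u}\nabla u)=a\nabla b-b\nabla a,
\end{equation*}
and $\operatorname{Re}(\overline{u}\nabla^\perp u)=a\nabla^\perp a+b\nabla^\perp b$.

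For \eqref{eq:A2.1}, $|u|^2=a^2+b^2$, so $\nabla^\perp|u|^2=2a\nabla^\perp a+2b\nabla^\perp b$, which equals $2\operatorname{Re}(\overline{u}\nabla^\perp u)$ by the formula above.

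For \eqref{eq:A2.2}, I expand
\begin{equation*}
(a\nabla^\perp a+b\nabla^\perp b)\cdot(a\nabla b-b\nabla a).
\end{equation*}
This uses $\nabla^\perp f\cdot\nabla f=0$ and, with $\nabla^\perp f=(\partial_y f,-\partial_x f)$,
\begin{equation*}
\nabla^\perp a\cdot\nabla b=\partial_y a\,\partial_x b-\partial_x a\,\partial_y b=-\det(\nabla a,\nabla b).
\end{equation*}
Antisymmetry then gives $\nabla^\perp b\cdot\nabla a=\det(\nabla a,\nabla b)$. The cross terms therefore give
\begin{equation*}
a^2(-\det)-b^2(\det)=-(a^2+b^2)\det(\nabla a,\nabla b),
\end{equation*}
which is \eqref{eq:A2.2}.

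For \eqref{eq:A2.3}, I apply \eqref{eq:curlproduct} to $a\nabla b$ and to $b\nabla a$. Since $\curl\nabla f=0$ for $C^2$ functions (this is where $u\in C^2$ is needed), the product rule gives
\begin{equation*}
\curl(a\nabla b)=-\nabla^\perp a\cdot\nabla b=\det(\nabla a,\nabla b),
\qquad
\curl(b\nabla a)=-\nabla^\perp b\cdot\nabla a=-\det(\nabla a,\nabla b).
\end{equation*}
Subtracting yields $2\det(\nabla a,\nabla b)$.

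There is no real obstacle here. The only delicate point is keeping the sign convention of $\nabla^\perp$ and of $\curl$ consistent with \eqref{eq:curlproduct}. I will fix the single sign computation $\nabla^\perp a\cdot\nabla b=-\det(\nabla a,\nabla b)$ once and reuse it in both \eqref{eq:A2.2} and \eqref{eq:A2.3}.
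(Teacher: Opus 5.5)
Your proof is correct and follows essentially the same route as the paper: you write $u=a+ib$ and verify the three identities by direct computation. The only difference is that the paper expands everything in coordinates, while you organise the same computation around $\nabla^\perp a\cdot\nabla b=-\det(\nabla a,\nabla b)$ and the product rule \eqref{eq:curlproduct}; your signs agree with the paper's conventions, and $C^2$ regularity enters exactly where the paper uses equality of mixed partials.
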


\begin{proof}
    Let $u = a+ib$, with $a= \operatorname{Re}(u),b=\operatorname{Im}(u) \in C^2(\mathbb R^2, \mathbb R)$. Then $|u|^2= a^2+b^2$ and so
\begin{equation*}
    \nabla |u|^2 = 2(aa_x+bb_x, aa_y+bb_y) \implies \nabla^\perp |u|^2 = 2(aa_y+bb_y, -aa_x-bb_x).
\end{equation*}
Let us now find the components of this gradient and the perpendicular gradient to write down the real and imaginary part of $\overline{u}\nabla u$. Then $\overline{u}= a-ib$ and $\nabla u = \nabla a+ i\nabla b$. Hence
\begin{equation*}
    \overline{u}\nabla u = (a-ib)(\nabla a+i\nabla b) = a\nabla a + b\nabla b + i(a\nabla b-b\nabla a).
\end{equation*}
So we get
\begin{equation*}
    \begin{split}
    &\operatorname{Re}(\overline{u}\nabla u) = a\nabla a + b\nabla b= (a a_x+ b b_x,a a_y+b b_y), \\
    &\operatorname{Im}(\overline{u}\nabla u) = a\nabla b-b\nabla a = (a b_x- b a_x,a b_y-b a_y).    
    \end{split}
\end{equation*}
Moreover if we consider $\overline{u}\nabla^\perp u$, we arrive similarly to
\begin{equation*}
    \begin{split}
    &\operatorname{Re}(\overline{u}\nabla^\perp u) = a\nabla^\perp a + b\nabla^\perp b= (a a_y+ b b_y,-a a_x-b b_x), \\
    &\operatorname{Im}(\overline{u}\nabla^\perp u) = a\nabla^\perp b-b\nabla^\perp a = (a b_y- b a_y,-a b_x+b a_x).     
    \end{split}
\end{equation*}
These computation show firstly equation~\eqref{eq:A2.1}.
Now
\begin{equation*}
    \begin{split}
\operatorname{Re}(\overline{u}\nabla^\perp u)\cdot \operatorname{Im}(\overline{u}\nabla u) &= (a a_y+b b_y,-a a_x-b b_x)\cdot (a b_x- b a_x,a b_y-b a_y) \\
&=(a a_y+ b b_y)(a b_x- b a_x)+ (-a a_x-b b_x)(a b_y-b a_y)\\
&= -a^2(a_xb_y-a_yb_x)- b^2(a_xb_y-a_yb_x)\\
&= -(a^2+b^2)(a_xb_y-a_yb_x)= -(a^2+b^2)\det(\nabla a,\nabla b).
\end{split}
\end{equation*}
Therefore we have~\eqref{eq:A2.2}.
Finally 
\begin{equation*}
    \begin{split}
\curl(\operatorname{Im}(\overline{u}\nabla u))&=\partial_x(a b_y-b a_y) -  \partial_y(a b_x- b a_x)\\
&= a_xb_y+ab_{yx}-b_xa_{y}-ba_{yx}-a_yb_x-ab_{xy}+b_ya_x+ba_{xy}.
    \end{split}
\end{equation*}
Since $a,b$ are smooth, we have that $a_{xy}=a_{yx}$ and $b_{xy}=b_{yx}$, so that
\begin{equation*}
    \curl(\operatorname{Im}(\overline{u}\nabla u))= 2(a_xb_y-a_yb_x) = 2\det(\nabla a,\nabla b),
\end{equation*}
and so~\eqref{eq:A2.3}.
\end{proof}

 \subsection{Elements of convex geometry}
  Let $\Omega$ be a bounded, open and convex set of $\mathbb{R}^n$. The support function of $\Omega$ is defined by
$
    h_\Omega(y)=\sup_{x\in \Omega}\left(x\cdot y\right)
$, 
$y\in \mathbb{R}^n$.
The width of $\Omega$ in the direction $y \in \mathbb{R}$ is defined as 
$    \omega_{\Omega}(y)=h_{\Omega}(y)+h_{\Omega}(-y) $,
 and the minimal width of $\Omega$ is the minimal distance between any two parallel support hyperplanes, i.e.
\begin{equation*}
    w_\Omega=\min\{  \omega_{\Omega}(y)\,:\; y\in\mathbb{S}^{n-1}\}.
\end{equation*}
The inradius of $\Omega$ is defined by
 \begin{equation}
     \label{inradiuss}
     R_
     \Omega=\sup\{r\in \mathbb R: B_r(x)\subset \Omega, x\in \Omega\},
 \end{equation}
and 
$  
\diam(\Omega)
= \sup_{x,y\in\Omega}|x-y|
$ 
stands for the diameter of~$\Omega$.

\begin{definizione}
    We say that a family $\{\Omega_\ell\}_{\ell > 0}$
    of non-empty, bounded, open and convex sets of $\mathbb{R}^2$
    is a family of  thinning domains if
    \begin{equation*}
        \dfrac{w_{\Omega_\ell}}{\diam(\Omega_\ell)}\xrightarrow{\ell \to 0}0.
    \end{equation*}

\end{definizione}

Particular families of thinning domains of our interests are as follows.
\begin{definizione} 
Let~$a$ be a positive number. 
The family $\{\Omega_a\}_{a > 0}$ defined as  $$\Omega_a = \big(0,1\big)\times \big(0,a\big)$$ is called a family  of \emph{thinning rectangles}. Given the point $V_a=(0,a)\in \mathbb R^2$ and the interval $I=(-1,1)$, the family $\{T_a\}_{a>0}$ defined as 
  \begin{equation*}
      T_a = conv\{I, V_a\}, 
  \end{equation*}
where $conv$ denotes the convex hull, 
is called a family of \emph{thinning isosceles triangles}.
\end{definizione}

The distance function from the boundary of~$\Omega$ 
is defined by
 \begin{equation*}
    d: \Omega \to [0,+\infty) :
    \left\{ x \mapsto \inf_{y\in\partial\Omega}\abs{x-y} \right\}
    .
 \end{equation*}
The inradius $R_\Omega$ is its maximum value. The distance function is concave, as a consequence of the convexity of $\Omega$ and for a.e.\ $x \in \Omega$ we have that $|\nabla d|=1$. For $t \in [0, R_\Omega]$, the inner parallel set $\Omega_t$ is the superlevel set of the distance function from the boundary at level $t$, i.e.
\begin{equation*}
    \Omega_t =\{x\in \Omega: d(x) > t\}.
\end{equation*}
In particular, we denote by
$ \mu(t) = |\Omega_t| $ and $P(t) = P(\Omega_t)$
the measure and perimeter of the inner parallel sets, respectively.

\subsection{Gauge invariance and symmetries}
What follows can be found in \cite{fournais2010spectral}.
Let $A,\tilde A : \Omega \to \R^2$ 
be two smooth magnetic potentials 
generating the same (not necessarily constant) magnetic field, namely
$B = \curl A = \curl\tilde A$. 
Consequently, $\curl(\tilde A-A)=0$ in~$\Omega$.
If \(\Omega\) is simply connected, 
then Poincaré's lemma ensures that there exists 
a smooth function $\varphi:\Omega \to \R$ 
such that $\tilde A=A+\nabla\varphi.
$
 In particular, if \(u\in W^{1,p}_{0,A}(\Omega)\), 
 then $v=e^{-i\varphi}u \in W^{1,p}_{0,\tilde A}(\Omega)$ 
 and $\nabla_{\tilde A}v= e^{-i\varphi}\nabla_Au
$. 
Consequently,
$|\nabla_{\tilde A}v|
=
|\nabla_Au|,$ and $
|v|
=
|u|$, which implies that the corresponding Rayleigh quotients 
in~\eqref{Rayleigh} coincide. It follows that, on simply connected domains~$\Omega$, the bottom of the spectrum $\lambda_p^B(\Omega)$
indeed depends only on the magnetic field~\(B\), and not on the particular choice of the magnetic potential.

For the homogeneous (i.e., constant) field~$B$,
two of the most known gauges in literature are 
the \textit{symmetric}
(or \emph{Poincar\'e} or \emph{transverse}) gauge
\begin{equation}\label{symmetric}
    A(x) = \frac{B}{2}(-x_2,x_1),
\end{equation}
and the \textit{Landau} gauge
\begin{equation*}
    A(x) = (0,Bx_1) \qquad \text{or}\qquad A(x) = -(Bx_2,0).
\end{equation*}
Below we shall also consider an interpolation 
between these standard choices, see~\eqref{affine}.

If $\tilde A = -A$, then $\curl \tilde A = - \curl A$, 
while $|\nabla_Au|=|\nabla_{\tilde{A}}u|$, and therefore 
\begin{equation}\label{eq:B-B}
    \lambda_p^B(\Omega)= \lambda_p^{-B}(\Omega). 
\end{equation}

A similar argument also works with the translations. 
Let $x_0 \in \mathbb R^2$. Let $\Omega_{x_0}= \Omega-\{x_0\}$. Let us assume that we are in the symmetric gauge. In particular
\begin{equation*}
    A(x-x_0) = A(x)-A(x_0).
\end{equation*}
Since $A(x_0)$ is a point, it means that if we consider $\varphi = A(x_0)\cdot x$, then $\nabla \varphi = A(x_0)$. Therefore $A(x-x_0)= A(x) + \nabla\varphi$ and a change of variables in~\eqref{Rayleigh} yields
\begin{equation}\label{eq:gaugetranslations}
    \lambda_p^B(\Omega_{x_0})= \lambda_p^B(\Omega).
\end{equation}

\section{The whole plane}\label{sec:3}
In this section, we consider the case of the homogeneous field
in the whole plane. 
In particular, we establish Theorem~\ref{thm:lower-bound-plane}
and Proposition~\ref{cor:two-sided-plane}.

\subsection{An idealised proof of the lower bound of Theorem~\ref{thm:lower-bound-plane}}

For Theorem~\ref{thm:lower-bound-plane}, 
which is the main and more advanced result, 
we first treat the idealised situation in which the function \(u\) in the Rayleigh quotient~\eqref{Rayleigh} admits a polar decomposition with a sufficiently smooth modulus and phase. Although restrictive, this setting captures the main mechanism of the proof and provides the blueprint for the general case.
\begin{prop}\label{prop:low_bound_plane_regular}
Let $B$ be constant and $p > 2$. 
Let $u \in C^\infty_c(\mathbb R^2, \mathbb C)$ be such that $u = \rho e^{i\phi}$ with $\rho\in C^\infty_c(\mathbb R^2, \mathbb R)$ and $\phi\in C^2(\mathbb R^2,\mathbb R)$. Then
\begin{equation*}
  \frac{\displaystyle \int_{\R^2}
  |\nabla_Au|^p}{\displaystyle\int_{\mathbb R^2}|u|^p} \geq \left(\frac{2}{p}\right)^{p/2} |B|^{p/2}.
\end{equation*}   
\end{prop}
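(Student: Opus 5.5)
The plan is to write the magnetic gradient in polar form, which splits $|\nabla_A u|^2$ into an amplitude part and a phase part. Then I use the curl constraint $\curl(\nabla\phi+A)=B$ to recover the field strength after one integration by parts. Young's and Hölder's inequalities finish the argument. Since the Rayleigh quotient is gauge invariant (Section~\ref{sec:2}), I can fix any smooth $A$ with $\curl A=B$.

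\textbf{Step 1: polar form of the energy.} Set $v:=\nabla\phi+A$, which is a $C^1$ vector field with $\curl v=\curl A=B$ because $\curl\nabla\phi=0$ for $\phi\in C^2$. A direct computation gives
\begin{equation*}
\nabla_A u=e^{i\phi}\bigl(\nabla\rho+i\rho\, v\bigr).
\end{equation*}
Since $\rho$ and $v$ are real, this yields
\begin{equation*}
|\nabla_A u|^2=|\nabla\rho|^2+\rho^2|v|^2 .
\end{equation*}

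\textbf{Step 2: integration by parts against the curl.} Because $p>2$, the function $|\rho|^p$ is $C^1$ with compact support, and $\nabla^\perp|\rho|^p=p|\rho|^{p-2}\rho\,\nabla^\perp\rho$. I apply the identity~\eqref{eq:curlproduct} with $f=|\rho|^p$ and $F=v$, and then the divergence theorem. The curl of a compactly supported $C^1$ field integrates to zero, so
\begin{equation*}
B\int_{\R^2}|\rho|^p=\int_{\R^2}|\rho|^p\curl v=\int_{\R^2}\nabla^\perp|\rho|^p\cdot v .
\end{equation*}

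\textbf{Step 3: Young's inequality.} Taking absolute values in Step~2 gives
\begin{equation*}
|B|\int_{\R^2}|\rho|^p\le p\int_{\R^2}|\rho|^{p-2}\,|\nabla\rho|\,\bigl(|\rho||v|\bigr).
\end{equation*}
I apply $ab\le\frac12(a^2+b^2)$ with $a=|\nabla\rho|$ and $b=|\rho||v|$, and use Step~1, to obtain
\begin{equation*}
|B|\int_{\R^2}|\rho|^p\le\frac p2\int_{\R^2}|\rho|^{p-2}|\nabla_A u|^2 .
\end{equation*}

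\textbf{Step 4: Hölder's inequality.} I use the exponents $\frac{p}{p-2}$ and $\frac p2$ and recall $|u|=|\rho|$. This gives
\begin{equation*}
\int_{\R^2}|\rho|^{p-2}|\nabla_Au|^2\le\Bigl(\int_{\R^2}|u|^p\Bigr)^{\frac{p-2}{p}}\Bigl(\int_{\R^2}|\nabla_Au|^p\Bigr)^{\frac2p}.
\end{equation*}
Combining with Step~3 and dividing by $\bigl(\int|u|^p\bigr)^{(p-2)/p}$, which is allowed whenever $u\ne0$, gives
\begin{equation*}
\frac{2|B|}{p}\Bigl(\int_{\R^2}|u|^p\Bigr)^{2/p}\le\Bigl(\int_{\R^2}|\nabla_Au|^p\Bigr)^{2/p}.
\end{equation*}
Raising both sides to the power $p/2$ yields the claimed bound.

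The only delicate point in this idealised setting is the integration by parts in Step~2. It needs $|\rho|^p\in C^1_c$, which is exactly where $p>2$ enters, and it needs $v\in C^1$, which is guaranteed by $\phi\in C^2$. The Hölder step also uses $p>2$ so that the exponent $p/(p-2)$ is finite. The genuine obstacle, namely the failure of a smooth polar decomposition on the nodal set, is excluded here by assumption and would be handled later through the regularised decomposition based on Lemma~\ref{lem:A2}.
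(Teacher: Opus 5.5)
Your proof is correct and follows the paper's argument step for step: the polar splitting $|\nabla_Au|^2=|\nabla\rho|^2+\rho^2|V|^2$ with $\curl V=B$, one integration by parts against the curl, Young's inequality, and H\"older with exponents $p/2$ and $p/(p-2)$. Writing $|\rho|^p$ instead of the paper's $\rho^p$ is a harmless refinement, since $\rho$ may change sign; one correction to your closing remark is that $|\rho|^p$ is already $C^1$ for every $p>1$, so $p>2$ is really needed only for the H\"older step, not for the integration by parts.
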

\begin{proof}
Since $u=\rho e^{i\phi}$, one has
$
  \nabla u = e^{i\phi}\bigl(\nabla \rho + i\rho \nabla \phi\bigr).
$  
Hence
\begin{equation}\label{eq:smoothcaseDA}
  \nabla u + iAu
  = e^{i\phi}\bigl[\nabla \rho + i\rho(\nabla \phi + A)\bigr].
\end{equation}
Now, since $|e^{i\phi}| = 1$, we have
\begin{align*}
  |(\nabla + iA)u|^2
  &= |e^{i\phi}|^2
     \bigl|\nabla \rho + i\rho(\nabla \phi + A)\bigr|^2 \\
  &= |\nabla \rho|^2
     + \rho^2 |\nabla \phi + A|^2
     + 2\operatorname{Re}\langle \nabla \rho,\overline{i\rho(\nabla \phi + A)}\rangle.
\end{align*}
But since $\nabla \rho$ and $\nabla \phi + A$ are real-valued, then
$
  \operatorname{Re}\langle \nabla \rho,\overline{i\rho(\nabla \phi + A)}\rangle = 0,
$  
and therefore
\begin{equation*}
  |(\nabla + iA)u|^2
  = |\nabla \rho|^2 + \rho^2|\nabla \phi + A|^2.
\end{equation*}
For simplicity, since $\mathrm{curl} \nabla\phi = 0$, let us denote
$
  V = \nabla\phi + A.
$  
Then $\mathrm{curl} V = \mathrm{curl} A = B$, and hence
$
  |(\nabla + iA)u|^2
  = |\nabla \rho|^2 + \rho^2 |V|^2.
$  
Consequently,
\begin{equation*}
  \int_{\R^2}|(\nabla + iA)u|^p
  = \int_{\R^2}\bigl(|\nabla \rho|^2 + \rho^2|V|^2\bigr)^{p/2}.
\end{equation*}
Now, since $\mathrm{curl} V = B$, we have
\begin{equation*}
  B\int_{\R^2}\rho^p
  = \int_{\R^2}\rho^p(\partial_x V_2-\partial_y V_1).
\end{equation*}
Since $\rho$ has compact support in $\R^2$, 
an integration by parts yields
\begin{equation*}
  \int_{\R^2}\rho^p\partial_x V_2
  = -\int_{\R^2}V_2\partial_x(\rho^p)
  \qquad\mbox{and}\qquad
  -\int_{\R^2}\rho^p\partial_y V_1
  = \int_{\R^2}V_1\partial_y(\rho^p).
\end{equation*}
Hence
\begin{align*}
  B\int_{\R^2}\rho^p
  = p\int_{\R^2}\rho^{p-1}\bigl(V_1\partial_y\rho -V_2\partial_x\rho\bigr) 
  = p\int_{\R^2}\rho^{p-1}\bigl(V\cdot \nabla^\perp\rho\bigr).
\end{align*}
Thus, considering the modulus,
\begin{equation*}
  |B|\int_{\R^2}\rho^p
  \leq p\int_{\R^2}\rho^{p-1}|V|\,|\nabla \rho|,
\end{equation*}
where we have used that $|\nabla^\perp\rho| = |\nabla\rho|$. 
Now we write
$
  \rho^{p-1}|V|\,|\nabla\rho|
  = \rho^{p-2}|\nabla\rho|\,\rho|V|.
$  
By Young's inequality
$
  2ab \leq a^2 + b^2,
$  
with $a = |\nabla\rho|$ and $b = \rho|V|$, we get
\begin{equation*}
  \rho^{p-1}|V|\,|\nabla\rho|
  \leq \frac{\rho^{p-2}}{2}
  \bigl(|\nabla\rho|^2 + \rho^2|V|^2\bigr).
\end{equation*}
Hence
\begin{equation*}
  |B|\int_{\R^2}\rho^p
  \leq \frac{p}{2}\int_{\R^2}\rho^{p-2}
  \bigl(|\nabla\rho|^2 + \rho^2|V|^2\bigr).
\end{equation*}
Applying H\"older's inequality with exponents $p/2$ and its conjugate $q = p / (p-2)$, 
we have
\begin{align*}
  |B|\int_{\R^2}\rho^p
  &\leq \frac{p}{2}
  \int_{\R^2}\rho^{p-2}\bigl(|\nabla\rho|^2 + \rho^2|V|^2\bigr) \\
  &\leq \frac{p}{2}
  \left(\int_{\R^2}\rho^p\right)^{(p-2)/p}
  \left(\int_{\R^2}\bigl(|\nabla\rho|^2 + \rho^2|V|^2\bigr)^{p/2}\right)^{2/p}.
\end{align*}
Taking the $p/2$ power, we obtain
\begin{equation*}
  |B|^{p/2}\left(\int_{\R^2}\rho^p\right)^{p/2}
  \leq \left(\frac{p}{2}\right)^{p/2}
  \left(\int_{\R^2}\rho^p\right)^{(p-2)/2}
  \int_{\R^2}\bigl(|\nabla\rho|^2 + \rho^2|V|^2\bigr)^{p/2}.
\end{equation*}
This gives
\begin{align*}
  \int_{\R^2}\bigl(|\nabla\rho|^2 + \rho^2|V|^2\bigr)^{p/2}
  \geq
  \left(\frac{2}{p}\right)^{p/2}|B|^{p/2}
  \frac{\displaystyle\left(\int_{\R^2}\rho^p\right)^{p/2}}
       {\displaystyle\left(\int_{\R^2}\rho^p\right)^{(p-2)/2}} = \left(\frac{2}{p}\right)^{p/2}|B|^{p/2}
  \int_{\R^2}\rho^p.
\end{align*}
Therefore
\begin{equation*}
  \frac{\displaystyle
  \int_{\R^2}\bigl(|\nabla\rho|^2 + \rho^2|V|^2\bigr)^{p/2}}
  {\displaystyle \int_{\R^2}\rho^p}
  \geq \left(\frac{2}{p}\right)^{p/2}|B|^{p/2}.
\end{equation*}
This concludes the proof of Proposition~\ref{prop:low_bound_plane_regular}.
\end{proof}

\subsection{A regularisation idea for 
the proof of Theorem~\ref{thm:lower-bound-plane}}

As we pointed out before, Proposition~\ref{prop:low_bound_plane_regular} is proved under very specific hypotheses. In general, if $u\in C^\infty_c(\mathbb R^2, \mathbb C)$, it is not guaranteed that the phase $\phi$ is regular enough. To overcome this issue we will rather consider the regularised decomposition  
\begin{equation}\label{decomposition}
    u = \rho_\varepsilon \xi_\varepsilon,
\end{equation}
where $\rho_\varepsilon= \sqrt{|u|^2+\varepsilon^2}$ and $\xi_\varepsilon= u/\rho_\varepsilon$. 

The main difficulty is that the vector field \(V=\nabla\phi+A\), which plays a crucial role in the proof of Proposition~\ref{prop:low_bound_plane_regular}, is no longer available. Indeed, the phase may fail to be sufficiently regular and it is not even defined on the nodal set of \(u\). Our strategy is therefore to construct a smooth vector field that agrees with \(V\) whenever the latter is well defined and can be used in its place in the general case. The proof of Proposition~\ref{prop:low_bound_plane_regular} naturally suggests how such a vector field should be defined. 
If $u=\rho e^{i\theta}$ and $\rho,\theta$ are regular enough, then
\begin{equation*}
  \nabla_Au=e^{i\theta}\bigl(\nabla\rho+i\rho(\nabla\theta+A)\bigr)
  =e^{i\theta}\bigl(\nabla\rho+i\rho V\bigr).
\end{equation*}
Multiplying by $\overline u=\rho e^{-i\theta}$ gives
\begin{equation*}
  \overline u\nabla_Au
  =\rho\nabla\rho+i\rho^2V.
\end{equation*}
Since $\rho$ and $\theta$ are real-valued, $\operatorname{Im}(\rho\nabla\rho)=0$, and hence considering the imaginary part both sides we get (whenever $\rho \neq 0)$
\begin{equation*}
  V=\frac{\operatorname{Im}(\overline u\nabla_Au)}{\rho^2}.
\end{equation*}
This suggests the regularised definition
\begin{equation}\label{eq:Vepsilon}
  V_\varepsilon
  =\frac{\operatorname{Im}(\overline u\nabla_Au)}{\rho_\varepsilon^2}
  =\frac{\operatorname{Im}(\overline u\nabla_Au)}{|u|^2+\varepsilon^2},
\end{equation}
which is smooth and therefore we can work with it in a classical sense. We can also write this expression in a more explicit way. Since
\begin{equation*}
  \overline u\nabla_Au
  =\overline u(\nabla u+iAu)
  =\overline u\nabla u+iA|u|^2,
\end{equation*}
we have
\begin{equation*}
  \operatorname{Im}(\overline u\nabla_Au)
  =\operatorname{Im}(\overline u\nabla u)+A|u|^2.
\end{equation*}
Therefore
\begin{equation}\label{eq:Vepsilon2}
  V_\varepsilon
  =\frac{\operatorname{Im}(\overline u\nabla u)+A|u|^2}{|u|^2+\varepsilon^2}
  =A +\frac{\operatorname{Im}(\overline u\nabla u)-\varepsilon^2A}{|u|^2+\varepsilon^2}.
\end{equation}

\subsection{The proof of Theorem~\ref{thm:lower-bound-plane}}

For the proof it will be sufficient to consider $u\in C^\infty_c(\mathbb R^2; \mathbb C)$, since an approximation argument will ensure the validity of the result for a generic $u \in W^{1,p}_{0,A}(\R^2)$.

Let $\varepsilon>0$ and let us consider 
the decomposition~\eqref{decomposition}.
Notice that, since $u\in C_c^\infty(\R^2,\mathbb{C})$ and $\rho_\varepsilon>0$, then
$ \xi_\varepsilon\in C^\infty_c(\R^2,\mathbb{C})$. Moreover
\begin{equation*}
  |\xi_\varepsilon|=\frac{|u|}{\sqrt{|u|^2+\varepsilon^2}}
  \leq 1.
\end{equation*}
Let us write its magnetic gradient:
\begin{align*}
  \nabla_Au
  =(\nabla+iA)(\rho_\varepsilon \xi_\varepsilon) =\xi_\varepsilon\nabla\rho_\varepsilon
    +\rho_\varepsilon(\nabla \xi_\varepsilon+iA\xi_\varepsilon).
\end{align*}
We want to make appear something similar to \eqref{eq:smoothcaseDA} on the right-hand side. We do it by adding and subtracting $i\xi_\varepsilon \rho_\varepsilon V_\varepsilon$, where $V_\varepsilon$ is the regularised version of the vector appearing in Proposition \ref{prop:low_bound_plane_regular}, defined in \eqref{eq:Vepsilon}. Hence
\begin{equation*}
  \nabla_Au = \xi_\varepsilon\bigl(\nabla\rho_\varepsilon+i\rho_\varepsilon V_\varepsilon\bigr)+\rho_\varepsilon\bigl(\nabla \xi_\varepsilon+i(A-V_\varepsilon)\xi_\varepsilon\bigr)=\xi_\varepsilon X_\varepsilon+ E_\varepsilon,
\end{equation*}
 where we denote by 
\begin{equation*}
  X_\varepsilon=\nabla\rho_\varepsilon+i\rho_\varepsilon V_\varepsilon, \qquad E_\varepsilon= \rho_\varepsilon\bigl(\nabla \xi_\varepsilon+i(A-V_\varepsilon)\xi_\varepsilon\bigr).
\end{equation*}
We now compute the error term $E_\varepsilon$. We have
\begin{equation*}
  \nabla \xi_\varepsilon
  =\nabla\left(\frac{u}{\rho_\varepsilon}\right)
  =\frac{\nabla u}{\rho_\varepsilon}
    -\frac{u\nabla\rho_\varepsilon}{\rho_\varepsilon^2}=\frac{\nabla u}{\rho_\varepsilon}-\frac{u\operatorname{Re}(\overline u\nabla u)}{\rho_\varepsilon^3},
\end{equation*}
where we used (see \cite[Prop.~2.1.2]{fournais2010spectral})
\begin{equation*}
  \nabla\rho_\varepsilon
  =\frac{\operatorname{Re}(\overline u\nabla u)}{\rho_\varepsilon}.
\end{equation*}
Hence, recalling the decomposition~\eqref{decomposition},
\begin{equation*}
E_\varepsilon
  =\rho_\varepsilon\bigl(\nabla \xi_\varepsilon+i(A-V_\varepsilon)\xi_\varepsilon\bigr) \\
  =\nabla u-\frac{u\operatorname{Re}(\overline u\nabla u)}{\rho_\varepsilon^2}
    +iAu-iV_\varepsilon u.
\end{equation*}
Using the definition of $V_\varepsilon$,
\begin{equation*}
    E_\varepsilon= \nabla u-u\frac{\operatorname{Re}(\overline u\nabla u)+i\operatorname{Im}(\overline u\nabla u)}{\rho_\varepsilon^2}
     +i\frac{\varepsilon^2uA}{\rho_\varepsilon^2}.
\end{equation*}
But $\operatorname{Re}(\overline u\nabla u)+i\operatorname{Im}(\overline u\nabla u)= \overline u\nabla u$ and $u\overline{u}=|u|^2$, so we have
\begin{equation*}
 E_\varepsilon= \nabla u-\frac{|u|^2}{\rho_\varepsilon^2}\nabla u
    +\frac{\varepsilon^2uA}{\rho_\varepsilon^2} 
  =\frac{\varepsilon^2}{|u|^2+\varepsilon^2}(\nabla+iA)u.
\end{equation*}
Eventually
\begin{equation}\label{eq:Evarepsilon}
    E_\varepsilon=\frac{\varepsilon^2}{|u|^2+\varepsilon^2}\nabla_Au.
\end{equation}
Moreover
\begin{equation*}
  \nabla_Au=\xi_\varepsilon X_\varepsilon+E_\varepsilon
  =\xi_\varepsilon X_\varepsilon+\frac{\varepsilon^2}{|u|^2+\varepsilon^2}\nabla_Au.
\end{equation*}
Hence
\begin{equation*}
  \left(1-\frac{\varepsilon^2}{|u|^2+\varepsilon^2}\right)\nabla_Au
  =\xi_\varepsilon X_\varepsilon.
\end{equation*}
Since 
\begin{equation*}
    1-\frac{\varepsilon^2}{|u|^2+\varepsilon^2}=\frac{|u|^2}{|u|^2+\varepsilon^2} = |\xi_\varepsilon|^2,
\end{equation*}
we have
\begin{equation}\label{eq:identity1}
  \xi_\varepsilon X_\varepsilon=|\xi_\varepsilon|^2\nabla_Au.
\end{equation}
Therefore
\begin{align*}
  |\nabla_Au|^2
  =|\xi_\varepsilon X_\varepsilon+E_\varepsilon|^2 =|\xi_\varepsilon|^2|X_\varepsilon|^2+|E_\varepsilon|^2
    +2\operatorname{Re}\bigl(\xi_\varepsilon X_\varepsilon\cdot\overline{E_\varepsilon}\bigr).
\end{align*}
Using \eqref{eq:Evarepsilon} and \eqref{eq:identity1}, we get
\begin{equation*}
  \operatorname{Re}\bigl(\xi_\varepsilon X_\varepsilon\cdot\overline{E_\varepsilon}\bigr)
  =\frac{\varepsilon^2|u|^2}{(|u|^2+\varepsilon^2)^2}|\nabla_Au|^2
 \ge 0.
\end{equation*}
This yields
$
    |\nabla_Au|^2 \ge|\xi_\varepsilon|^2|X_\varepsilon|^2= |\xi_\varepsilon|^2|\nabla\rho_\varepsilon+i\rho_\varepsilon V_\varepsilon\bigr|^2,
$    
and therefore
\begin{equation}\label{eq:importantestimate}
    \int_{\mathbb R^2}|\nabla_A u|^p\,dx \ge \int_{\mathbb R^n} |\xi_\varepsilon|^p|\nabla\rho_\varepsilon+i\rho_\varepsilon V_\varepsilon\bigr|^p\,dx.
\end{equation}
Let
$
  V_\varepsilon=(V_\varepsilon^1,V_\varepsilon^2).
$  
Using the explicit formula \eqref{eq:Vepsilon2} for $V_\varepsilon$, we have
\begin{equation*}
  \curl V_\varepsilon
  =B+\curl\left(
  \frac{\operatorname{Im}(\overline u\nabla u)-\varepsilon^2A}{|u|^2+\varepsilon^2}
  \right).
\end{equation*}
If we denote
\begin{equation*}
  W_\varepsilon
  =\frac{\operatorname{Im}(\overline u\nabla u)-\varepsilon^2A}{|u|^2+\varepsilon^2},
\end{equation*}
then
\begin{equation}\label{eq:differencecurl}
  B=\curl V_\varepsilon-\curl W_\varepsilon.
\end{equation}
Since $|\xi_\varepsilon|\leq1$, we have
\begin{equation*}
  B\int_{\R^2}|u|^p
  =B\int_{\R^2}\rho_\varepsilon^p|\xi_\varepsilon|^p
  \leq B\int_{\R^2}\rho_\varepsilon^p.
\end{equation*}
We want to write $B$ as in \eqref{eq:differencecurl} and then integrate by parts, but we need to slightly modify $\rho_\varepsilon$, which is not compactly supported in $\mathbb R^2$. Therefore we write
\begin{equation}\label{regularisation}
  \rho_\varepsilon=r_\varepsilon+\varepsilon,
  \qquad
  r_\varepsilon=\sqrt{|u|^2+\varepsilon^2}-\varepsilon.
\end{equation}
We notice that $r_\varepsilon\in C_c^\infty(\mathbb R^2,\R)$ and $0< r_\varepsilon\leq |u|$. Moreover, since $u$ is compactly supported, a Taylor expansion yields
\begin{equation*}
  B\int_{\R^2}\rho_\varepsilon^p= B\int_{\mathbb R^2}(r_\varepsilon + \varepsilon)^p
  =B\int_{\R^2}r_\varepsilon^p+o(1)
\end{equation*}
as $\varepsilon \to 0$.
Recalling that $B= \curl V_\varepsilon-\curl W_\varepsilon$, we have that 
\begin{equation*}
     B\int_{\R^2}\rho_\varepsilon^p= B\int_{\R^2}r_\varepsilon^p+o(1)= \int_{\R^2}r_\varepsilon^p\curl V_\varepsilon-\int_{\R^2}r_\varepsilon^p\curl W_\varepsilon + o(1).
\end{equation*}
Therefore, considering the modulus, we have that 

\begin{equation}\label{submerged}
    |B|\int_{\R^2}|u|^p\le 
    \underbrace{\bigg|
    \int_{\R^2}r_\varepsilon^p\curl V_\varepsilon
    \bigg|}_{I_{1,\varepsilon}}
    +\underbrace{\bigg|
    \int_{\R^2}r_\varepsilon^p\curl W_\varepsilon
    \bigg|}_{I_{2,\varepsilon} }
    +o(1).  
\end{equation}

The proof of the theorem will be concluded immediately
after establishing this submerged lemma.
\begin{lemma}\label{Lem.submerged}
\begin{equation}\label{eq:estimate1epsilon}
    I_{1,\varepsilon} \le \frac{p}{2}
  \left(\int_{\R^2}\rho_\varepsilon^p\right)^{(p-2)/p}
  \left(\int_{\R^2}
  |\nabla_Au|^p
  \right)^{2/p}+o(1)
  \qquad\mbox{and}\qquad
  I_{2,\varepsilon} = o(1).
\end{equation}
\end{lemma}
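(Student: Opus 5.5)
The plan is to handle the two terms separately. For $I_{1,\varepsilon}$ I would repeat the integration by parts and the Young--H\"older argument of Proposition~\ref{prop:low_bound_plane_regular}, putting the weights $|\xi_\varepsilon|$ in the right places so that \eqref{eq:importantestimate} closes the estimate. For $I_{2,\varepsilon}$ I would \emph{not} integrate by parts, since $W_\varepsilon$ contains the regularised phase gradient $\operatorname{Im}(\overline u\nabla u)/\rho_\varepsilon^2$, which does not vanish as $\varepsilon\to0$. Instead I would compute $\curl W_\varepsilon$ explicitly with Lemma~\ref{lem:A2} and show that it carries a factor $\varepsilon^2$.

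\emph{The term $I_{1,\varepsilon}$.} Since $r_\varepsilon\in C_c^\infty$, $V_\varepsilon$ is smooth and $\nabla r_\varepsilon=\nabla\rho_\varepsilon$, integrating by parts exactly as in Proposition~\ref{prop:low_bound_plane_regular} gives
\begin{equation*}
I_{1,\varepsilon}\le p\int_{\R^2} r_\varepsilon^{p-1}|V_\varepsilon|\,|\nabla\rho_\varepsilon| .
\end{equation*}
The key pointwise fact is
\begin{equation*}
r_\varepsilon=\frac{|u|^2}{\rho_\varepsilon+\varepsilon}\le\frac{|u|^2}{\rho_\varepsilon}=\rho_\varepsilon|\xi_\varepsilon|^2 .
\end{equation*}
I then split the integrand as
\begin{equation*}
r_\varepsilon^{p-1}|V_\varepsilon||\nabla\rho_\varepsilon|
=r_\varepsilon^{p-2}\,\frac{r_\varepsilon}{\rho_\varepsilon|\xi_\varepsilon|^2}\,\bigl(|\xi_\varepsilon||\nabla\rho_\varepsilon|\bigr)\bigl(|\xi_\varepsilon|\rho_\varepsilon|V_\varepsilon|\bigr)
\end{equation*}
on $\{u\neq0\}$; the set $\{u=0\}$ contributes nothing because $r_\varepsilon=0$ there. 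The middle factor is at most $1$. Young's inequality $2ab\le a^2+b^2$ then bounds the integrand by
\begin{equation*}
\tfrac12 r_\varepsilon^{p-2}|\xi_\varepsilon|^2|X_\varepsilon|^2
=\tfrac12 r_\varepsilon^{p-2}\bigl(|\xi_\varepsilon|^p|X_\varepsilon|^p\bigr)^{2/p}.
\end{equation*}
H\"older's inequality with exponents $p/(p-2)$ and $p/2$, together with $r_\varepsilon\le\rho_\varepsilon$, gives
\begin{equation*}
I_{1,\varepsilon}\le\frac p2\Bigl(\int_{\R^2}\rho_\varepsilon^p\Bigr)^{(p-2)/p}\Bigl(\int_{\R^2}|\xi_\varepsilon|^p|X_\varepsilon|^p\Bigr)^{2/p}.
\end{equation*}
The last factor is bounded by $\bigl(\int|\nabla_Au|^p\bigr)^{2/p}$ by \eqref{eq:importantestimate}, so the first inequality in \eqref{eq:estimate1epsilon} holds even without the $o(1)$ term.

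\emph{The term $I_{2,\varepsilon}$.} Write $u=a+ib$. Using \eqref{eq:curlproduct} with $f=\rho_\varepsilon^{-2}$ and \eqref{eq:A2.3}, and the identities
\begin{equation*}
\nabla^\perp\rho_\varepsilon^{-2}=-2\rho_\varepsilon^{-4}\operatorname{Re}(\overline u\nabla^\perp u)
\end{equation*}
(from \eqref{eq:A2.1}) and \eqref{eq:A2.2}, I expect
\begin{equation*}
\curl\frac{\operatorname{Im}(\overline u\nabla u)}{\rho_\varepsilon^2}=\frac{2\det(\nabla a,\nabla b)}{\rho_\varepsilon^2}-\frac{2|u|^2\det(\nabla a,\nabla b)}{\rho_\varepsilon^4}=\frac{2\varepsilon^2\det(\nabla a,\nabla b)}{\rho_\varepsilon^4}.
\end{equation*}
Similarly,
\begin{equation*}
\curl\frac{\varepsilon^2A}{\rho_\varepsilon^2}=\frac{\varepsilon^2B}{\rho_\varepsilon^2}+\frac{2\varepsilon^2\operatorname{Re}(\overline u\nabla^\perp u)\cdot A}{\rho_\varepsilon^4}.
\end{equation*}
On the compact set $K=\operatorname{supp}u$ this gives
\begin{equation*}
|\curl W_\varepsilon|\le\frac{C\varepsilon^2}{\rho_\varepsilon^4}\bigl(|\nabla u|^2+|B|\rho_\varepsilon^2+|u||\nabla u||A|\bigr)\le \frac{C_K\,\varepsilon^2}{\rho_\varepsilon^2}\Bigl(1+\frac{1}{\rho_\varepsilon^{2}}\Bigr).
\end{equation*}
Multiplying by $r_\varepsilon^p\le\rho_\varepsilon^{p}|\xi_\varepsilon|^{2p}=|u|^{2p}/\rho_\varepsilon^{p}$ and using $\varepsilon^2\le\rho_\varepsilon^2$, $|u|\le\rho_\varepsilon$ and $p>2$, the integrand $r_\varepsilon^p|\curl W_\varepsilon|$ is bounded by $C_K(1+|u|^{p-2})$ uniformly in $\varepsilon\in(0,1)$. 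It vanishes outside $K$ and on $\{u=0\}$, and tends to $0$ pointwise on $\{u\neq0\}$ because $\varepsilon^2/\rho_\varepsilon^2\to0$ there. Dominated convergence then yields $I_{2,\varepsilon}=o(1)$.

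The main obstacle is the $I_{2,\varepsilon}$ step. The naive route of integrating by parts fails, and the proof depends on the exact cancellation producing the prefactor $\varepsilon^2/\rho_\varepsilon^4$ in $\curl W_\varepsilon$. The negative powers of $\rho_\varepsilon$ must then be absorbed by $r_\varepsilon^p$, which is where $p>2$ enters. In the $I_{1,\varepsilon}$ step, the delicate point is distributing the weights $|\xi_\varepsilon|$, which is done through $r_\varepsilon\le\rho_\varepsilon|\xi_\varepsilon|^2$.
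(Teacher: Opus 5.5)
Your proposal is correct. For $I_{2,\varepsilon}$ you follow the paper's route: you compute $\curl W_\varepsilon$ via \eqref{eq:curlproduct} and Lemma~\ref{lem:A2}, use the cancellation $\rho_\varepsilon^2-|u|^2=\varepsilon^2$, and conclude by dominated convergence. Here $p>2$ is what controls the negative powers of $\rho_\varepsilon$. The paper packages this as $\varepsilon^2 r_\varepsilon^p/\rho_\varepsilon^4\le r_\varepsilon^{p-2}/16$, and your dominating function $C_K(1+|u|^{p-2})$ does the same job.

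For $I_{1,\varepsilon}$, however, your route is genuinely different. The paper applies Young and H\"older to the unweighted quantity $|X_\varepsilon|^2=|\nabla\rho_\varepsilon|^2+\rho_\varepsilon^2|V_\varepsilon|^2$. It then splits $|X_\varepsilon|^2=|\xi_\varepsilon|^2|X_\varepsilon|^2+\varepsilon^2\bigl(|\nabla\rho_\varepsilon|^2/\rho_\varepsilon^2+|V_\varepsilon|^2\bigr)$, uses Minkowski's inequality in $L^{p/2}$, and shows by dominated convergence that the remainder vanishes; this is where its $o(1)$ comes from. You instead insert the weight $|\xi_\varepsilon|^2$ before applying Young's inequality, via the pointwise bound $r_\varepsilon=|u|^2/(\rho_\varepsilon+\varepsilon)\le\rho_\varepsilon|\xi_\varepsilon|^2$. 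This gives an exact inequality with no error term and no limiting argument, so it is both shorter and sharper.

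A further advantage is that your H\"older step naturally produces $\bigl(\int_{\R^2}r_\varepsilon^p\bigr)^{(p-2)/p}$. This is finite and tends to $\int_{\R^2}|u|^p$, because $r_\varepsilon\le|u|$. You should stop there rather than weakening with $r_\varepsilon\le\rho_\varepsilon$. Since $\rho_\varepsilon\ge\varepsilon$ on the whole plane, $\int_{\R^2}\rho_\varepsilon^p=+\infty$. The inequality as literally stated is therefore vacuous and cannot be passed to the limit in \eqref{eq:estimatefinal}. The paper's own route implicitly needs its H\"older step restricted to $\operatorname{supp}u$ to reach the same usable form.
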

\begin{proof}

\ \medskip \\
\noindent 
\textbf{Estimate for $I_{1,\varepsilon}$:}
Integrating by parts and using the fact that $r_\varepsilon\le \rho_\varepsilon$ and that their gradients coincide, we get (as in the smooth case)
\begin{align*}
  \int_{\R^2}r_\varepsilon^p\curl V_\varepsilon
  =\int_{\R^2}r_\varepsilon^p
    (\partial_xV_\varepsilon^2-\partial_yV_\varepsilon^1) 
  =p\int_{\R^2}r_\varepsilon^{p-1}
    \nabla^\perp r_\varepsilon\cdot V_\varepsilon, 
\end{align*}
from which
\begin{equation*}
    I_{1,\varepsilon}\leq p\int_{\R^2}\rho_\varepsilon^{p-1}
    |\nabla\rho_\varepsilon|\,|V_\varepsilon|.
\end{equation*}
Applying Young's and H\"older's inequality 
as in the 
proof of Proposition~\ref{prop:low_bound_plane_regular}, 
we get
\begin{equation*}
  p\int_{\R^2}\rho_\varepsilon^{p-1}
  |\nabla\rho_\varepsilon|\,|V_\varepsilon|
  \leq \frac{p}{2}
  \left(\int_{\R^2}\rho_\varepsilon^p\right)^{(p-2)/p}
  \left(\int_{\R^2}
  \bigl(|\nabla\rho_\varepsilon|^2+
\rho_\varepsilon^2|V_\varepsilon|^2\bigr)^{p/2}
  \right)^{2/p}.
\end{equation*}
Furthermore,
\begin{align*}
|\nabla\rho_\varepsilon|^2+
\rho_\varepsilon^2|V_\varepsilon|^2
  = |\xi_\varepsilon|^2
  \bigl(|\nabla\rho_\varepsilon|^2+
\rho_\varepsilon^2|V_\varepsilon|^2\bigr)+(1-|\xi_\varepsilon|^2)\bigl(|\nabla\rho_\varepsilon|^2+
\rho_\varepsilon^2|V_\varepsilon|^2\bigr).
\end{align*}
Since 
\begin{equation*}
    1-|\xi_\varepsilon|^2= \frac{\varepsilon^2}{|u|^2+\varepsilon^2}= \frac{\varepsilon^2}{\rho^2_\varepsilon},
\end{equation*}
we have
\begin{equation*}
    |\nabla\rho_\varepsilon|^2+
\rho_\varepsilon^2|V_\varepsilon|^2
  = |\xi_\varepsilon|^2
  \bigl(|\nabla\rho_\varepsilon|^2+
\rho_\varepsilon^2|V_\varepsilon|^2\bigr)+ \varepsilon^2\bigg(\frac
{|\nabla \rho_\varepsilon|^2}{\rho_\varepsilon^2}+ |V_\varepsilon|^2\bigg).
\end{equation*}
Therefore, considering its $L^{p/2}$-norm, with $p>2$, we get by Minkowski inequality
\begin{equation*}
\begin{aligned}
  \bigg(\int_{\R^2}
  \bigl(|\nabla\rho_\varepsilon|^2+&
  \rho_\varepsilon^2|V_\varepsilon|^2\bigr)^{p/2}
  \bigg)^{2/p} \\
  & \leq
  \left(\int_{\R^2}|\xi_\varepsilon|^p
  \bigl(|\nabla\rho_\varepsilon|^2+
  \rho_\varepsilon^2|V_\varepsilon|^2\bigr)^{p/2}
  \right)^{2/p}
  +\varepsilon^2\left(\int_{\R^2}
  \left(\frac{|\nabla\rho_\varepsilon|^2}{\rho_\varepsilon^2}
  +|V_\varepsilon|^2\right)^{p/2}
  \right)^{2/p}.
\end{aligned}
\end{equation*}
We want to prove that the last integral tends to zero. 
Using
\begin{equation*}
  \nabla\rho_\varepsilon
  =\frac{\operatorname{Re}(\overline u\nabla u)}{\sqrt{|u|^2+\varepsilon^2}},
  \qquad
  |\nabla\rho_\varepsilon|\leq
  \frac{|u|\,|\nabla u|}{\sqrt{|u|^2+\varepsilon^2}},
\end{equation*}
we get
\begin{equation*}
    \frac{|\nabla\rho_\varepsilon|^2}{\rho_\varepsilon^2}\le \frac{|u|^2|\nabla u|^2}{(|u|^2+\varepsilon^2)^2}.
\end{equation*}
Furthermore
\begin{equation*}
  |V_\varepsilon|^2= \frac{|\operatorname{Im}(\overline u\nabla u)+A|u|^2|^2}{(|u|^2+\varepsilon^2)^2}
  \leq 2\left(
  \frac{|\operatorname{Im}(\overline u\nabla u)|^2+|A|^2|u|^4}{(|u|^2+\varepsilon^2)^2}
  \right)\le 2\left(
  \frac{|u|^2(|\nabla u|^2+|A|^2|u|^2)}{(|u|^2+\varepsilon^2)^2}\right).
\end{equation*}
Hence
\begin{equation*}
    \varepsilon^2\bigg(\frac{|\nabla\rho_\varepsilon|^2}{\rho_\varepsilon^2}
  +|V_\varepsilon|^2\bigg)\le \frac{\varepsilon^2|u|^2}{(|u|^2+\varepsilon^2)^2}(3|\nabla u|^2+|A|^2|u|^2).
\end{equation*}
Moreover, calling $t = (|u|/\varepsilon)^2$, it is easy to check that
\begin{equation*}
    \frac{\varepsilon^2|u|^2}{(|u|^2+\varepsilon^2)^2}= \frac{|u|^2}{\varepsilon^2(1+\frac{|u|^2}{\varepsilon^2})^2}= \frac{t}{(1+t)^2}\le \frac{1}{4}.
\end{equation*}
Eventually the family
\begin{equation*}
    g_\varepsilon = \bigg[\varepsilon^2\bigg(\frac{|\nabla\rho_\varepsilon|^2}{\rho_\varepsilon^2}
  +|V_\varepsilon|^2\bigg)\bigg]^\frac{p}{2}
\end{equation*}
is such that it pointwise converges to $0$. Then by the smoothness assumption and compact support of $u$ and the local boundedness of $A$, gives
\begin{equation*}
    g_\varepsilon\le \bigg[\frac{3|\nabla u|^2+|A|^2|u|^2}{4}\bigg]^\frac{p}{2}\in L^1(\mathbb R^2).
\end{equation*}
Therefore, by the dominated convergence theorem we have that 
\begin{equation*}
    \lim_{\varepsilon\to 0}\varepsilon^2\left(\int_{\R^2}
  \left(\frac{|\nabla\rho_\varepsilon|^2}{\rho_\varepsilon^2}
  +|V_\varepsilon|^2\right)^{p/2}
  \right)^{2/p}=0.
\end{equation*}
In this way, as $\varepsilon\to 0$, we have proved that 
\begin{equation*}
    \bigg(\int_{\R^2}
  \bigl(|\nabla\rho_\varepsilon|^2+
  \rho_\varepsilon^2|V_\varepsilon|^2\bigr)^{p/2}
  \bigg)^{2/p} \le \left(\int_{\R^2}|\xi_\varepsilon|^p
  \bigl(|\nabla\rho_\varepsilon|^2+
  \rho_\varepsilon^2|V_\varepsilon|^2\bigr)^{p/2}
  \right)^{2/p} + o(1).
\end{equation*}
From this inequality and recalling \eqref{eq:importantestimate}, 
we arrive to the first estimate of~\eqref{eq:estimate1epsilon}.

This estimate is crucial since it is independent on $V_\varepsilon$, which is the problematic term since for $\varepsilon\to 0$ we have
that
\begin{equation*}
    V_\varepsilon\to V_0 = A+ \frac{\operatorname{Im}(\overline u\nabla u)}{|u|},
\end{equation*}
which is not well defined on the set $\{u=0\}$.

\medskip
\noindent 
\textbf{Estimate for  $I_{2,\varepsilon}:$}  
We compute directly $\curl W_\varepsilon$. By the linearity of the $\curl$ operator, we have that
\begin{equation*}
    \curl W_\varepsilon= \curl\bigg(\frac{\operatorname{Im}(\overline{u}\nabla u)}{\rho_\varepsilon^2}\bigg)- \varepsilon^2\curl\bigg(\frac{A}{\rho_\varepsilon^2}\bigg).
\end{equation*}
Therefore we get
\begin{equation}\label{eq:curl1}
    \curl\bigg(\frac{\operatorname{Im}(\overline{u}\nabla u)}{\rho_\varepsilon^2}\bigg)= \frac{\curl(\operatorname{Im}(\overline{u}\nabla u))}{\rho_\varepsilon^2}- \nabla^\perp \bigg(\frac{1}{\rho_\varepsilon^2}\bigg)\cdot\operatorname{Im}(\overline{u}\nabla u), 
\end{equation}
and
\begin{equation}\label{eq:curl2}
    \curl\bigg(\frac{A}{\rho_\varepsilon^2}\bigg) = \frac{B}{\rho_\varepsilon^2}- \nabla^\perp \bigg(\frac{1}{\rho_\varepsilon^2}\bigg)\cdot A.
\end{equation}
Let us start by computing \eqref{eq:curl2}.
Applying \eqref{eq:A2.1}, we get
\begin{equation*}
    \nabla^\perp\bigg(\frac{1}{\rho_\varepsilon^2}\bigg) =  -\frac{\nabla^\perp(|u|^2)}{\rho_\varepsilon^4}=-  2\frac{\operatorname{Re}(\overline{u}\nabla^\perp u) }{\rho_\varepsilon^4},
\end{equation*}
and consequently
\begin{equation}
    \curl\bigg(\frac{A}{\rho_\varepsilon^2}\bigg) = \frac{B}{\rho_\varepsilon^2}+2\frac{\operatorname{Re}(\overline{u}\nabla^\perp u)\cdot A }{\rho_\varepsilon^4}.
\end{equation}
Let us compute now \eqref{eq:curl1}. By \eqref{eq:A2.3}, we have that
\begin{equation*}
    \frac{\curl(\operatorname{Im}(\overline{u}\nabla u))}{\rho_\varepsilon^2}= \frac{2 \det (\nabla\operatorname{Re}(u),\nabla\operatorname{Im}(u))}{\rho_\varepsilon^2}.
\end{equation*}
Again, using \eqref{eq:A2.1} and applying \eqref{eq:A2.2}, we get 
\begin{equation*}
    \nabla^\perp \bigg(\frac{1}{\rho_\varepsilon^2}\bigg)\cdot\operatorname{Im}(\overline{u}\nabla u) = -2\frac{\operatorname{Re}(\overline{u}\nabla^\perp u)\cdot \operatorname{Im}(\overline{u}\nabla u)}{\rho_\varepsilon^4}= \frac{|u|^2\det (\nabla\operatorname{Re}(u),\nabla\operatorname{Im}(u))}{\rho_\varepsilon^4}.
\end{equation*}
This gives us
\begin{equation*}
    \curl\bigg(\frac{\operatorname{Im}(\overline{u}\nabla u)}{\rho_\varepsilon^2}\bigg) = \bigg(1-\frac{|u|^2}{\rho^2_\varepsilon}\bigg)\frac{2}{\rho_\varepsilon^2}\det (\nabla\operatorname{Re}(u),\nabla\operatorname{Im}(u))= \frac{2\varepsilon^2}{\rho_\varepsilon^4}\det (\nabla\operatorname{Re}(u),\nabla\operatorname{Im}(u)).
\end{equation*}
Eventually
\begin{equation*}
\begin{split}
I_{2,\varepsilon}&=\bigg|\int_{\mathbb R^2}r_\varepsilon^p \curl W_\varepsilon\bigg| \le \int_{\mathbb{R}^2}r_\varepsilon^p\bigg|\curl\bigg(\frac{\operatorname{Im}(\overline{u}\nabla u)}{\rho_\varepsilon^2}\bigg)\bigg|+ \varepsilon^2\int_{\mathbb{R}^2}r_\varepsilon^p\bigg|\curl\bigg(\frac{A}{\rho_\varepsilon^2}\bigg)\bigg|\\
&= 2\int_{\mathbb{R}^2}\frac{\varepsilon^2r_\varepsilon^{p}}{\rho_\varepsilon^4}| \det (\nabla\operatorname{Re}(u),\nabla\operatorname{Im}(u))|+\int_{\mathbb{R}^2}\frac{\varepsilon^2r_\varepsilon^{p}}{\rho_\varepsilon^4}|B\rho_\varepsilon^2+2\operatorname{Re}(\overline{u}\nabla^\perp u)\cdot A |\\
&= \int_{\mathbb R^2}\frac{\varepsilon^2r_\varepsilon^{p}}{\rho_\varepsilon^4}\big(2| \det (\nabla\operatorname{Re}(u),\nabla\operatorname{Im}(u))|+|B\rho_\varepsilon^2+2\operatorname{Re}(\overline{u}\nabla^\perp u)\cdot A |\big).
\end{split}
\end{equation*}
The integrand converges pointwise to $0$. Moreover by the smoothness of $u$ and the assumptions on $A$, the term in the round bracket is bounded in the support of $u$. We only need to show that the term 
\begin{equation*}
  \frac{\varepsilon^2r_\varepsilon^{p}}{\rho_\varepsilon^4}
\end{equation*}
is bounded. Recalling~\eqref{regularisation}
and naming $r_\varepsilon= \varepsilon t$, we have
\begin{equation*}
  \frac{\varepsilon^2r_\varepsilon^{p}}{\rho_\varepsilon^4}= \frac{\varepsilon^2r_\varepsilon^{p}}{(r_\varepsilon+\varepsilon)^4}= \frac{\varepsilon^{p+2}t^p}{\varepsilon^4(1+t)}= \varepsilon^{p-2}\frac{t^p}{(1+t)^4}= \varepsilon^{p-2}\bigg(\frac{t}{(1+t)^2}\bigg)^2 t^{p-2}.
\end{equation*}
Now, as we did before we know that $t/(1+t)^2\le 1/4$, so that
\begin{equation*}
    \frac{\varepsilon^2r_\varepsilon^{p}}{\rho_\varepsilon^4}\le \frac{1}{16}(\varepsilon t)^{p-2}=\frac{r_\varepsilon^{p-2}}{16} \le \frac{|u|^{p-2}}{16}.
\end{equation*}
Hence this term is bounded whenever $p\ge 2$ (since $u$ is with compact support). By the dominated convergence theorem we have that $|I_{2,\varepsilon}|\to 0$ as $\varepsilon \to 0^+$.

\medskip
This concludes the proof of the submerged Lemma~\ref{Lem.submerged}.
\end{proof}
Coming back to~\eqref{submerged} and using Lemma~\ref{Lem.submerged},
we arrive to 
\begin{equation}\label{eq:estimatefinal}
    |B|\int_{\mathbb R^2}|u|^p \le I_{1,\varepsilon}+I_{2,\varepsilon}+ o(1)\le  \frac{p}{2}
  \left(\int_{\R^2}\rho_\varepsilon^p\right)^{(p-2)/p}
  \left(\int_{\R^2}
  |\nabla_Au|^p
  \right)^{2/p}+ o(1).
\end{equation}
Therefore, recalling that $\rho_\varepsilon\to |u|$ pointwise, again by the dominated convergence theorem we have that
\begin{equation*}
    \int_{\mathbb R^2}\rho_\varepsilon^p \to \int_{\mathbb R^2}|u|^p.
\end{equation*}
Hence, passing to the limit as $\varepsilon\to 0$ in \eqref{eq:estimatefinal}, we have
\begin{equation*}
    |B|\int_{\mathbb R^2}|u|^p \le  \frac{p}{2}
  \left(\int_{\R^2}|u|^p\right)^{(p-2)/p}
  \left(\int_{\R^2}
  |\nabla_Au|^p
  \right)^{2/p},
\end{equation*}
that in turn implies
\begin{equation*}
    \frac{\displaystyle\int_{\R^2}
  |\nabla_Au|^p}{\displaystyle\int_{\mathbb R^2}|u|^p}\ge \bigg(\frac{2}{p}\bigg)^{\frac{p}{2}}|B|^{\frac{p}{2}}.
\end{equation*}
This inequality has been proved for every $u\in C^\infty_c(\mathbb R^2,\mathbb C)$. 
By density, the result extends to $u \in W_{0,A}^{1,p}(\R^2)$.
The proof of Theorem~\ref{thm:lower-bound-plane} is thus concluded.

\subsection{The upper bound, proof of Proposition~\ref{cor:two-sided-plane}}
Now we turn to the upper bound for $\lambda_p^B(\mathbb R^2)$
contained in Proposition~\ref{cor:two-sided-plane}.

Without loss of generality,
we choose the symmetric gauge~\eqref{symmetric}.
    Let us use in the variational characterisation~\eqref{Rayleigh} 
    of $\lambda_p^B(\mathbb R^2)$ the first Landau eigenstate
    \begin{equation*}
        \psi(x) = e^{-\frac{|B|}{4}|x|^2}.
    \end{equation*}
 Since $\nabla \psi = -\frac{|B|}{2}xe^{-\frac{|B|}{4}|x|^2}$, then
    \begin{equation*}
        -i\nabla \psi + A\psi = \bigg(i\frac{|B|}{2}x+A\bigg)\psi.
    \end{equation*}
   Since $A$ is a real potential and $|A|^2=(|B||x|^2)/4$, then
    \begin{equation*}
        |\nabla_A \psi|^2 = \frac{|B||x|^2}{2}|\psi|^2.
    \end{equation*}
    This gives
    \begin{equation*}
        \int_{\mathbb R^2} |\nabla_A\psi|^p = \frac{|B|^p}{2^{\frac{p}{2}}}\int_{\mathbb R^2}|x|^p e^{-\frac{p|B|}{4}|x|^2}\,dx.
    \end{equation*}
    Using the coarea formula, we get
    \begin{equation*}
      \int_{\mathbb R^2}|x|^p e^{-\frac{p|B|}{4}|x|^2}\,dx= 2\pi \int_0^{+\infty}t^{p+1} e^{-\frac{p|B|}{4}t^2}\,dt.
    \end{equation*}
    Naming $r=\frac{p}{4}|B|t^2$, then
    \begin{equation*}
    \int_{\mathbb R^2}|x|^p e^{-\frac{p|B|}{4}|x|^2}\,dx=\bigg(\frac{4}{p|B|}\bigg)^\frac{p+2}{2}\pi \int_0^{+\infty}r^{\frac{p}{2}}e^{-r}\,dr = \bigg(\frac{4}{p|B|}\bigg)^\frac{p+2}{2}\pi \Gamma\bigg(\frac{p+2}{2}\bigg),
\end{equation*}
which gives
\begin{equation*}
  \int_{\mathbb R^2} |\nabla_A\psi|^p= \frac{2^{\frac{p+4}{2}}|B|^{\frac{p-2}{2}}}{p^\frac{p+2}{2}}\pi  \Gamma\bigg(\frac{p+2}{2}\bigg).
\end{equation*}
Now, applying again the coarea formula, we get
\begin{equation*}
    \int_{\mathbb R^2}|\psi|^p = 2\pi \int_0^{+\infty}te^{-\frac{p|B|}{4}t^2}\,dt= -\frac{4\pi}{p|B|}\int_0^{+\infty}\frac{d}{dt}\bigg(e^{-\frac{p|B|}{4}t^2}\bigg) \,dt = \frac{4\pi}{p|B|}.
\end{equation*}
Therefore
\begin{equation*}
    \lambda_p^B(\mathbb R^2) \le \Gamma\bigg(\frac{p+2}{2}\bigg)\bigg(\frac{2}{p}\bigg)^{\frac{p}{2}}|B|^{\frac{p}{2}}.
\end{equation*}
This concludes the proof of Proposition~\ref{cor:two-sided-plane}.

\subsection{The strong magnetic field, proof of Theorem~\ref{thm:strongregime}}
Finally, we prove Theorem~\ref{thm:strongregime} concerned with
the convergence of $\lambda_p^B(\Omega)$, for any $\Omega \subset \mathbb R^2$ open and simply connected set, in the strong magnetic field regime.

In view of the symmetry~\eqref{eq:B-B}, 
it is enough to consider the case
\(B\to+\infty\). Moreover, by the translation invariance \eqref{eq:gaugetranslations}, we may assume,
without loss of generality, that $0\in\Omega$. By the scaling property \eqref{eq:scalingproperty}, we get
\begin{equation*}
\frac{\lambda_p^B(\Omega)}
{\lambda_p^B(\mathbb R^2)}
=
\frac{\lambda_p^1(\sqrt{B}\,\Omega)}
{\lambda_p^1(\mathbb R^2)}.
\end{equation*}
Hence it is enough to prove that
\begin{equation*}
\lim_{B\to+\infty}
\lambda_p^1(\sqrt{B}\,\Omega)
=
\lambda_p^1(\mathbb R^2).
\end{equation*}
As we already pointed out, by the monotonicity with respect to the set inclusion, since $\sqrt{B}\Omega \subset \mathbb R^2$
\begin{equation*}
    \lambda_p^1(\sqrt{B}\Omega) \ge \lambda_p^1(\mathbb R^2).
\end{equation*}
Let us prove the inverse inequality. Let $\varepsilon>0$. By the definition of
$\lambda_p^1(\mathbb R^2)$, there exists
$u_\varepsilon\in C_c^\infty(\mathbb R^2,\mathbb C)$ such that $\|u_\varepsilon\|_{L^p(\mathbb R^2)}=1
$
and
\begin{equation*}
\int_{\mathbb R^2}
|\nabla_Au_\varepsilon|^p
\le
\lambda_p^1(\mathbb R^2)+\varepsilon.
\end{equation*}
Since \(u_\varepsilon\) has compact support, there exists a magnetic field $B(\varepsilon)$, such that for every $B\ge B(\varepsilon)$, we get
\begin{equation*}
\operatorname{supp}(u_\varepsilon)
\subset
\sqrt{B}\,\Omega.
\end{equation*} 
Consequently  $u_\varepsilon
\in
C_c^\infty(\sqrt{B}\,\Omega)$ and it can be used as a trial function in $\lambda_p^1(\sqrt{B}\Omega)$, giving
\begin{equation*}
\lambda_p^1(\sqrt{B}\,\Omega)
\le
\displaystyle
\int_{\mathbb R^2}
|\nabla_Au_\varepsilon|^p
\le
\lambda_p^1(\mathbb R^2)+\varepsilon.
\end{equation*}
The conclusion of Theorem~\ref{thm:strongregime}
follows by the arbitrariness of $\varepsilon$.

\section{Sharp geometric bounds}\label{sec:4}
In this section we give the proofs of the upper and lower bounds 
of Theorem~\ref{thm:polya-bounds} and Proposition~\ref{prop:lowboundpolya}, 
respectively. 
They represent an extension of the P\'olya-type bounds 
to the magnetic and nonlinear cases.

For the upper bound we will consider the affine family of magnetic gauges
\begin{equation}\label{affine}
A_\theta(x)
=
\big(-\theta  x_2,\,(1-\theta) x_1\big)B,
\end{equation}
with $\theta\in\mathbb R$, which satisfies
$
\curl A_\theta = B.
$
Since the proof of Theorem \ref{thm:polya-bounds} is quite long, we split it into two parts: in the first one we prove the inequalities~\eqref{eq:lambdapB} for $1<p\le 2$ and $p>2$, and in the other we prove their sharpness.


\subsection{Proof of Theorem \ref{thm:polya-bounds}:
Inequality \texorpdfstring{\eqref{eq:lambdapB}}{eq:lambdapB}}

In dimension $2$, any open and bounded convex set $\Omega$ can always be boxed in a rectangle of sides $w_\Omega$ and $P(\Omega)/2$, where $P(\Omega)$ and $w_\Omega$ denote the perimeter and the minimal width of the set $\Omega$ respectively. Therefore, we can always translate and rotate $\Omega$ in such a way
\begin{equation}\label{eq:rotationtranslation}
    |x_1|\le \frac{P(\Omega)}{4}, \qquad |x_2|\le \frac{w_\Omega}{2}, \qquad\forall x=(x_1,x_2)\in\Omega.
\end{equation}
Let $u$ be any real-valued trial function for $\lambda_p^B(\Omega)$, then since also $A_\theta$ is real, we get
\begin{equation}\label{eq:testfunct}
    \lambda_p^B(\Omega)\le\frac{\displaystyle\int_\Omega |(-i\nabla+A_\theta)u|^p\,dx}{\displaystyle\int_\Omega |u|^p}=\frac{\displaystyle\int_\Omega (|\nabla u|^2+|A_\theta|^2 |u|^2)^{\frac{p}{2}}\,dx}{\displaystyle\int_\Omega |u|^p}.
\end{equation}
Here we need to distinguish two cases. For the rest of the proof and in both cases, we will choose $u$ as a function depending only on the distance function from the boundary, i.e.
\[
u(x)=f(d(x)),\qquad f(0)=0,
\]
with $f$ real-valued.

\subsubsection*{Case $1< p\le 2$.} 

In this case we have that $\frac{1}{2}< \frac{p}{2}\le 1$, therefore by the concavity of the map $t\to t^\frac{p}{2}$, we have that $(x+y)^\frac{p}{2}\le x^\frac{p}{2}+y^\frac{p}{2}$, for $x,y \ge 0$, implying from \eqref{eq:testfunct} that
\begin{equation*}
    \lambda_p^B (\Omega) \le \frac{\displaystyle\int_\Omega|\nabla u|^p}{\displaystyle\int_\Omega |u|^p\,dx}+\frac{\displaystyle\int_\Omega |A_\theta|^p |u|^p\,dx}{\displaystyle\int_\Omega |u|^p\,dx}= I_1(u)+I_2(\theta,u). 
\end{equation*}

Let us start estimating the non-magnetic part $I_1$. By the coarea formula and the fact that $|\nabla d|=1$ a.e., we have
\[
\int_\Omega |\nabla u|^p
=
\int_0^{R_\Omega} |f'(t)|^p P(\Omega_t)\,dt,
\qquad
\int_\Omega |u|^p
=
\int_0^{R_\Omega} |f(t)|^p P(\Omega_t)\,dt.
\]

We now perform Pólya's change of variables
\[
s(t)=\frac{\pi_p}{2}\frac{\mu(t)}{|\Omega|},
\qquad s\in(0,\tfrac{\pi}{2}),
\]
so that
\[
\frac{ds}{dt}
=
-\frac{\pi_p}{2|\Omega|}P(\Omega_t).
\]
Setting $g(s)=f(t(s))$, we obtain
\begin{equation*}
    \frac{\displaystyle\int_\Omega |\nabla u|^p}{\displaystyle\int_\Omega |u|^p}
\le
\Big(\frac{\pi_p}{2}\Big)^p
\frac{1}{|\Omega|^p}
\frac{\displaystyle\int_0^{\pi_p/2}|g'(s)|^pP(t(s))^p\,ds}
{\displaystyle\int_0^{\pi_p/2}|g(s)|^2\,ds}.
\end{equation*}
Since $\Omega$ is convex, $P(\Omega_t)\le P(\Omega)$ for all $t$, hence
\[
\frac{\displaystyle\int_\Omega |\nabla u|^p}{\displaystyle\int_\Omega |u|^p}
\le
\Big(\frac{\pi_p}{2}\Big)^p
\frac{P(\Omega)^p}{|\Omega|^p}
\frac{\displaystyle\int_0^{\pi_p/2}|g'(s)|^p\,ds}
{\displaystyle\int_0^{\pi_p/2}|g(s)|^p\,ds}.
\]
Choosing the $p$-sine function $g(s)=\sin_p s$ (see \cite[Sec.~2.1]{DPP2019}), we obtain
\begin{equation}\label{eq:I1ple2}
I_1(u)=\frac{\displaystyle\int_\Omega |\nabla u|^p}{\displaystyle\int_\Omega |u|^p}
\le
\Big(\frac{\pi_p}{2}\Big)^p
\frac{P(\Omega)^p}{|\Omega|^p}.
\end{equation}
Let us now estimate the magnetic part $I_2$. 
Applying again coarea formula and the concavity inequality, we get
\begin{equation*}
\begin{split}
    \int_\Omega |A_\theta|^p |u|^p\,dx &=B^p \int_0^{R_\Omega}|f(t)|^p\int_{\partial \Omega_t}(\theta^2|x_2|^2+(1-\theta)^2|x_1|^2)^\frac{p}{2}\,d\mathcal{H}^1\,dt\\
    & \le B^p\bigg(\theta^p\int_0^{R_\Omega}|f(t)|^p\int_{\partial \Omega_t}|x_2|^p\,d\mathcal{H}^1+(1-\theta)^p\int_0^{R_\Omega}|f(t)|^p\int_{\partial \Omega_t}|x_1|^p\,d\mathcal{H}^1\bigg)\\
    &= B^p (\theta^p M_{2,p}+(1-\theta)^p M_{1,p}),
    \end{split}
\end{equation*}
where we have denoted by
\begin{equation}\label{eq:weightedpmoments}
    M_{i,p}= \int_0^{R_\Omega}|f(t)|^p\int_{\partial \Omega_t}|x_i|^p\,d\mathcal{H}^1, \qquad i=1,2,
\end{equation}
the weighted $p$-moments. Moreover, seeing it as a function of $\theta$, i.e., setting
\begin{equation*}
    F_p(\theta) =\theta^p M_{2,p}+(1-\theta)^p M_{1,p},
\end{equation*}
we have
\begin{equation*}
    F'_p(\theta) = p(\theta^{p-1} M_{2,p}-(1-\theta)^{p-1}M_{1,p}).
\end{equation*} 
Consequently,
$F_p'(\theta)=0$ if and only if
\begin{equation*}
    \bigg(\frac{\theta}{1-\theta}\bigg)^{p-1}= \frac{M_{1,p}}{M_{2,p}}\qquad \implies \qquad \theta_0 = \frac{M_{1,p}^{\frac{1}{p-1}}}{M_{1,p}^{\frac{1}{p-1}}+M_{2,p}^{\frac{1}{p-1}}}.
\end{equation*}
It is easy to check that $F''_p(\theta_0)>0$, then $\theta_0$ is the unique minimum point. Some straightforward computations gives
\begin{equation*}
    F_p(\theta_0 ) = \frac{M_{1,p}M_{2,p}}{\big(M_{1,p}^\frac{1}{p-1}+M_{2,p}^{\frac{1}{p-1}}\big)^{p-1}}.
\end{equation*}
Therefore, choosing $\theta=\theta_0$, we obtain
\begin{equation}\label{eq:AleBF}
    \int_\Omega |A_\theta|^p |u|^p\,dx\le B^pF_p(\theta_0).
\end{equation}
Firstly, let us stress that by \eqref{eq:rotationtranslation}, we have
\begin{equation}\label{eq:Mi}
M_{i,p}
\le
\max_{\Omega}|x_i|^p
\int_0^{R_\Omega} |f(t)|^p P(\Omega_t)\,dt\le\begin{cases}
\displaystyle\Big(\frac{P(\Omega)}{4}\Big)^p\int_\Omega |u|^p\,dx 
& \mbox{if} \quad i=1, \\
 \displaystyle   \Big(\frac{w_\Omega}{2}\Big)^p\int_\Omega |u|^p\,dx 
 & \mbox{if} \quad i=2.
\end{cases}
\end{equation}
Moreover it is clear that $F_p(\theta_0)\le M_{i,p}$ for $i=1,2$. This fact, together with \eqref{eq:AleBF} and \eqref{eq:Mi}, gives
\begin{equation*}
    I_2(\theta_0,u) = \frac{\displaystyle\int_\Omega |A_\theta|^p |u|^p\,dx}{\displaystyle\int_\Omega |u|^p\,dx}\le B^p\min\bigg\{\bigg(\frac{P(\Omega)}{4}\bigg)^p, \bigg(\frac{w_\Omega}{2}\bigg)^p\bigg\}.
\end{equation*}
Nevertheless, in the class of planar convex sets one has
$
    P(\Omega) \ge \pi w_\Omega,
$    
where the equality case is achieved on bodies with constant width. This implies that
\begin{equation*}
    \frac{P(\Omega)}{4}\ge \frac{\pi}{4}w_\Omega> \frac{w_\Omega}{2},
\end{equation*}
yielding
\begin{equation}\label{eq:I2ple2}
    I_2(\theta_0,u)\le \bigg(\frac{Bw_\Omega}{2}\bigg)^p.
\end{equation}
Inequalities \eqref{eq:I1ple2} and \eqref{eq:I2ple2} give a first estimate, that is
\begin{equation}\label{eq:firstestimatelambdaple2}
    \lambda_p^B(\Omega) \le \Big(\frac{\pi_p}{2}\Big)^p
\frac{P(\Omega)^p}{|\Omega|^p} +\bigg(\frac{Bw_\Omega}{2}\bigg)^p.
\end{equation}
Moreover, we can estimate $F(\theta_0)$ differently. 
Applying the arithmetic--geometric inequality   
\begin{equation*}
    \sqrt{xy}\le \bigg(\frac{x^r+y^r}{2}\bigg)^\frac{1}{r}, 
    \qquad r\ge 1, \quad x,y \ge 0, 
\end{equation*}
to the $p$-means with
$r = (p-1)^{-1}>1$, we get
\begin{equation*}
 M_{1,p}M_{2,p} \le  \frac{\big(M_{1,p}^{\frac{1}{p-1}}+M_{2,p}^{\frac{1}{p-1}}\big)^{2(p-1)}}{2^{2(p-1)}}.
\end{equation*}
In particular 
\begin{equation*}
    F_p(\theta_0) \le \frac{\big(M_{1,p}^{\frac{1}{p-1}}+M_{2,p}^{\frac{1}{p-1}}\big)^{p-1}}{2^{2(p-1)}}.
\end{equation*}
Therefore choosing $\theta = \theta_0$, we get

\begin{equation*}
    \int_\Omega |A_\theta|^p u^p\le B^p \frac{\big(M_{1,p}^{\frac{1}{p-1}}+M_{2,p}^{\frac{1}{p-1}}\big)^{p-1}}{2^{2(p-1)}}.
\end{equation*}
Now, since $1<p\le 2$, by the concavity of $t\to t^{p-1}$, we get
\begin{equation*}
    \big(M_{1,p}^{\frac{1}{p-1}}+M_{2,p}^{\frac{1}{p-1}}\big)^{p-1}\le M_{1,p}+M_{2,p}.
\end{equation*}
Using again \eqref{eq:Mi}, we arrive to
\begin{equation*}
    \int_\Omega |A_\theta|^p u^p\le \frac{B^p}{2^{2(p-1)}}\bigg[\bigg(\frac{P(\Omega)}{4}\bigg)^p+\bigg(\frac{w_\Omega}{2}\bigg)^p\bigg] \int_\Omega |u|^p\,dx,
\end{equation*}
implying 
\begin{equation}\label{eq:secondestimatelambdaple2}
    \lambda_1^B(\Omega) \le I_1(u)+I_2(u,\theta_0)\le \Big(\frac{\pi_p}{2}\Big)^p
\frac{P(\Omega)^p}{|\Omega|^p}+
\frac{B^p}{2^{2(p-1)}}\bigg[\bigg(\frac{P(\Omega)}{4}\bigg)^p+\bigg(\frac{w}{2}\bigg)^p\bigg]. 
\end{equation}
By \eqref{eq:firstestimatelambdaple2} and \eqref{eq:secondestimatelambdaple2} we have the thesis for the first case.

\subsubsection*{Case $p>2$.} 

In this case we elevate $\lambda_1^B(\Omega)$ to the power $\frac{2}{p}$ and use the Minkowski inequality, having
\begin{equation*}
    \begin{split}\lambda_1^B(\Omega)^{\frac{2}{p}} &\le \frac{\displaystyle\bigg(\int_\Omega (|\nabla u|^2+|A_\theta|^2 |u|^2)^{\frac{p}{2}}\,dx\bigg)^{\frac{2}{p}}}{\displaystyle\bigg(\int_\Omega |u|^p\,dx \bigg)^\frac{2}{p}}\\
    &\le \left(\frac{\displaystyle\int_\Omega |\nabla u|^p\,dx}{\displaystyle\int_\Omega |u|^p\,dx }\right)^\frac{2}{p}+\left(\frac{\displaystyle\int_\Omega |A_\theta|^p |u|^p\,dx}{\displaystyle\int_\Omega |u|^p\,dx }\right)^\frac{2}{p}= I_1(u)^\frac{2}{p}+I_2(\theta,u)^\frac{2}{p} .
    \end{split}
\end{equation*}
As before
\begin{equation}\label{eq:I1pge2}
    I_1(u)^\frac{2}{p} \le \left(\Big(\frac{\pi_p}{2}\Big)^p
\frac{P(\Omega)^p}{|\Omega|^p}\right)^\frac{2}{p}= \Big(\frac{\pi_p}{2}\Big)^2
\frac{P(\Omega)^2}{|\Omega|^2}.
\end{equation}
We only need to estimate $I_2(\theta,u).$ In this case we can apply another time Minkowski inequality to obtain
\begin{equation*}
    \begin{split}
\left(\int_\Omega |A_\theta|^p |u|^p\,dx\right)^\frac{2}{p}&= B^2\left(\int_\Omega (\theta^2|x_2|^2|u|^2+(1-\theta)^2|x_1|^2|u|^2)^\frac{p}{2}\right)^\frac{2}{p}\\
&\le B^2 \left(\int_\Omega (\theta^p|x_2|^p|u|^p\,dx \right)^\frac{2}{p}+     \left(\int_\Omega ((1-\theta)^p|x_2|^p|u|^p\,dx \right)^\frac{2}{p}\\
&= B^2(\theta^2 M_{2,p}^\frac{2}{p}+ (1-\theta)^2M_{1,p}^\frac{2}{p}),
    \end{split}
\end{equation*}
where $M_{i,p}$ has been defined in \eqref{eq:weightedpmoments}. We define
\begin{equation*}
    G_p(\theta)=\theta^2 M_{2,p}^\frac{2}{p}+ (1-\theta)^2M_{1,p}^\frac{2}{p},
\end{equation*}
and, as in the previous case, it is possible to prove that its unique minimum of $G_p(\theta)$ is given by
\begin{equation*}
    G_p(\theta_0) = \frac{M_{1,p}^\frac{2}{p}M_{2,p}^\frac{2}{p}}{M_{1,p}^\frac{2}{p}+M_{2,p}^\frac{2}{p}}. 
\end{equation*}
Since $G_p(\theta_0)\le M_{i,p}^{\frac{2}{p}}$, following the same argument as before, we have that
\begin{equation}\label{eq:I2pge2}
I_2(\theta_0,u)^\frac{2}{p}\le \bigg(\frac{Bw_\Omega}{2}\bigg)^2.
\end{equation}
Therefore using \eqref{eq:I1pge2}--\eqref{eq:I2pge2} and elevating to the power $p/2$, we get 
\begin{equation}\label{eq:firstestimatelambdapge2}
    \lambda_p^B(\Omega) \le \bigg[\Big(\frac{\pi_p}{2}\Big)^2
\frac{P(\Omega)^2}{|\Omega|^2}+\bigg(\frac{Bw_\Omega}{2}\bigg)^2\bigg]^\frac{p}{2}.
\end{equation}
Let us give another estimate, now. Using again the classical arithmetic--geometric mean inequality, we get
\begin{equation*}
    G_p(\theta_0) \le \frac{M_{1,p}^\frac{2}{p}+M_{2,p}^\frac{2}{p}}{4}.
\end{equation*}
Therefore, applying again \eqref{eq:Mi}, we have that
\begin{equation}\label{eq:I2pge22}
    I_2(\theta,u)^\frac{2}{p} \le \frac{B^2}{4} \bigg[\bigg(\frac{P(\Omega)}{4}\bigg)^2+ \bigg(\frac{w_\Omega}{2}\bigg)^2\bigg].
\end{equation}
This gives 
\begin{equation*}
    \lambda_1^B(\Omega) \le \bigg\{\Big(\frac{\pi_p}{2}\Big)^2
\frac{P(\Omega)^2}{|\Omega|^2}+\frac{B^2}{4} \bigg[\bigg(\frac{P(\Omega)}{4}\bigg)^2+ \bigg(\frac{w_\Omega}{2}\bigg)^2\bigg]\bigg\}^\frac{p}{2},
\end{equation*}
and therefore the thesis in the case $p>2$.

\subsection{Proof of Theorem \ref{thm:polya-bounds}: 
Sharpness of inequality \texorpdfstring{\eqref{eq:lambdapB}}{eq:lambdapB}}
The sharpness of the inequality is understood in the isoperimetric sense: Although equality is never attained, the constant $(\pi_p/{2})^p$ is optimal, as shown by a family of thinning domains.
Let $0<a<1$ be a real number and let us consider the rectangle $\Omega_a = \big(0,1\big)\times \big(0,a\big)$. Let us stress that if $a$ is sufficiently small, then
\begin{equation*}
    \min\bigg\{w_{\Omega_a}^p,2^{2-p}\bigg[\bigg(\frac{P(\Omega_a)}{4}\bigg)^p+\bigg(\frac{w_{\Omega_a}}{2}\bigg)^p\bigg]\bigg\}= w_{\Omega_a}^p, \qquad  \min\bigg\{w_{\Omega_a}^2, \bigg(\frac{P(\Omega_a)}{4}\bigg)^2+ \bigg(\frac{w_{\Omega_a}}{2}\bigg)^2\bigg\}= w_{\Omega_a}^2.
\end{equation*}
Using the diamagnetic inequality~\eqref{diamagnetic} 
and \cite[Lem.~3.1]{MR4393129}, we have that
\begin{equation*}
    \lambda_p^B(\Omega_a)\ge \lambda_p^0(\Omega_a)\ge \bigg(\frac{a}{2}\bigg)^{-p}\bigg(\frac{\pi_p}{2}\bigg)^p.
\end{equation*}
Therefore, with the new upper bound we know that
\begin{equation*}
    \bigg(\frac{a}{2}\bigg)^{-p}\bigg(\frac{\pi_p}{2}\bigg)^p\le \lambda_p^B(\Omega_a)\le\begin{cases}
      \displaystyle\bigg(\frac{\pi_p}{2}\bigg)^p\frac{P(\Omega_a)^p}{|\Omega_a|^p}
+
\bigg(\frac{B w_{\Omega_a}}{2}\bigg)^p 
& \mbox{if} \quad 1<p\le 2, \\ \\
\displaystyle\bigg[\bigg(\frac{\pi_p}{2}\bigg)^2\frac{P(\Omega_a)^2}{|\Omega_a|^2}
+
\bigg(\frac{B w_{\Omega_a}}{2}\bigg)^2\bigg]^\frac{p}{2} 
& \mbox{if} \quad p>2.
    \end{cases} 
\end{equation*}
Multiplying by $|\Omega_a|^p/P(\Omega_a)^p$, we get
\begin{equation*}
    \frac{|\Omega_a|^p}{P(\Omega_a)^p}\bigg(\frac{a}{2}\bigg)^{-p}\bigg(\frac{\pi_p}{2}\bigg)^p\le \frac{\lambda_p^B(\Omega_a)|\Omega_a|^p}{P(\Omega_a)^p}\le
    \begin{cases}
        \displaystyle\bigg(\frac{\pi_p}{2}\bigg)^p
+
\frac{|\Omega_a|^p}{P(\Omega_a)^p}\bigg(\frac{B w_{\Omega_a}}{2}\bigg)^p & \mbox{if} \quad 1<p\le 2, \\ \\
\displaystyle
\bigg[\bigg(\frac{\pi_p}{2}\bigg)^2
+
\frac{|\Omega_a|^2}{P(\Omega_a)^2}\bigg(\frac{B w_{\Omega_a}}{2}\bigg)^2\bigg]^\frac{p}{2} & \mbox{if} \quad p>2.
    \end{cases}
\end{equation*}
It is clear that
\begin{equation*}
    P(\Omega_a) = 2(1+a),\qquad |\Omega_a|=a,\qquad w_{\Omega_a}= a,
\end{equation*}
therefore
\begin{equation*}
    \frac{|\Omega_a|^p}{P(\Omega_a)^p}\bigg(\frac{a}{2}\bigg)^{-p}\bigg(\frac{\pi_p}{2}\bigg)^p=\bigg(\frac{2a}{2(1+a)}\bigg)^p\bigg(\frac{\pi_p}{2}\bigg)^p\to\bigg(\frac{\pi_p}{2}\bigg)^p\qquad \text{as}\;\;a\to 0^+.
\end{equation*}
On the other side, when $1<p\le 2$:
\begin{equation*}
    \bigg(\frac{\pi_p}{2}\bigg)^p+\frac{|\Omega_a|^p}{P(\Omega_a)^p}\bigg(\frac{B w_{\Omega_a}}{2}\bigg)^p= \bigg(\frac{\pi_p}{2}\bigg)^p+B^p \bigg(\frac{a^2}{4(1+a)}\bigg)^p\to \bigg(\frac{\pi_p}{2}\bigg)^p \qquad \text{as}\;\;a\to 0^+,
\end{equation*}
while for $p>2$
\begin{equation*}
    \bigg[\bigg(\frac{\pi_p}{2}\bigg)^2+\frac{|\Omega_a|^2}{P(\Omega_a)^2}\bigg(\frac{B w_{\Omega_a}}{2}\bigg)^2\bigg]^\frac{p}{2}= \bigg[\bigg(\frac{\pi_p}{2}\bigg)^2+B^2 \bigg(\frac{a^2}{4(1+a)}\bigg)^2\bigg]^\frac{p}{2}\to \bigg(\frac{\pi_p}{2}\bigg)^p \qquad \text{as}\;\;a\to 0^+.
\end{equation*}
Therefore for every $p>1$, have
\begin{equation*}
    \lim_{a\to 0^+}\frac{\lambda_p^B(\Omega_a)|\Omega_a|^2}{P(\Omega_a)^2}= \bigg(\frac{\pi_p}{2}\bigg)^p,
\end{equation*}
proving the sharpness of the inequality for families of thinning rectangles $\Omega_a$.

\begin{oss}
   The presence of the minimum in the inequality \eqref{eq:lambdapB} takes into count somehow the ``thickness'' of the set. Indeed, let us consider the linear case $p=2$. We have that
   \begin{equation*}
     \lambda_2^B(\Omega)
\le
\frac{\pi^2}{4}\frac{P(\Omega)^2}{|\Omega|^2}
+
\frac{B^2}{4}\min \bigg\{w^2_\Omega, \frac{P(\Omega)^2}{16}+\frac{w^2_\Omega}{4}\bigg\}. 
   \end{equation*}
   We note that 
   \begin{equation*}
       \frac{P(\Omega)^2}{16}+\frac{w^2_\Omega}{4}= w^2_\Omega\quad\iff\quad  \frac{P(\Omega)}{w_\Omega}= 2\sqrt{3},
   \end{equation*}
and
   \begin{equation*}
       \min \bigg\{w_\Omega^2, \frac{P(\Omega)^2}{16}+\frac{w_\Omega^2}{4}\bigg\} =\begin{cases}
       \displaystyle\frac{P(\Omega)^2}{16}+\frac{w_\Omega^2}{4} 
       & \displaystyle
       \text{when}\;\; \frac{P(\Omega)}{w_\Omega}\le 2\sqrt{3},\\
       \\
       w_\Omega^2 
       & \displaystyle 
       \text{when}\;\; \frac{P(\Omega)}{w_\Omega}\ge 2\sqrt{3}.
       \end{cases}
   \end{equation*}
  We stress that $2\sqrt{3}$ is the exact ratio between the perimeter and the minimal width of a equilateral triangle.
   From this observation we can say that if a planar convex set $\Omega$ is such that $P(\Omega)/w_\Omega< 2\sqrt{3}$, that it is thicker than the equilateral triangle in the sense of the perimeter-to-width ratio.
   Indeed if, for instance $\Omega = B_1$ is the open unit ball, then 
   \begin{equation*}
         \frac{P(\Omega)^2}{16}+\frac{w_\Omega^2}{4} =\frac{4\pi^2}{16}+1< 4 = w_\Omega^2.
   \end{equation*}
\end{oss}

\subsection{Proof of Proposition \ref{prop:lowboundpolya}}
Finally, we give a proof of 
the lower bound of Proposition \ref{prop:lowboundpolya}. 
This time we do not split the proof of the inequality and the proof of the sharpness, since the former follows immediately by the diamagnetic inequality \eqref{diamagnetic} and Hersch-Protter inequality.

  To prove the inequality, it suffices to recall that 
  the diamagnetic inequality~\eqref{diamagnetic} gives
  $
      \lambda_1^B(\Omega) \ge \lambda_p^0(\Omega).
  $    
 Now, recalling the Hersch--Protter inequality (see \cite{hersch1960frequence,protter1981lower} for $p=2$ and \cite{brasco_inradius} for $p\neq 2$), which states that
  \begin{equation}\label{eq:HerschProtter}
      \lambda_p(\Omega)R_\Omega^p \ge \bigg(\frac{\pi_p}{2}\bigg)^p,
  \end{equation}
  and using the following inequality proved in \cite{fenchel_bonnesen}
  \begin{equation}\label{eq:bonnesen2d}
      1 \le \frac{P(\Omega)R_\Omega}{\abs{\Omega}}\le 2,
      \end{equation}
      then 
      \begin{equation*}
          \frac{\lambda_p^B(\Omega)\abs{\Omega}^p}{P(\Omega)^p}\ge \frac{\lambda_p^0(\Omega)\abs{\Omega}^p}{P(\Omega)^p}= \lambda_p^0(\Omega)R_\Omega^p \frac{\abs{\Omega}^p}{P(\Omega)^pR_\Omega^p} \ge \bigg(\frac{\pi_p}{4}\bigg)^p.
      \end{equation*}
      
  To prove the sharpness, we follow \cite[Thm.~4.2]{brasco_inradius}, with the necessaries differences due to the presence of the magnetic field. Let $0<\alpha<1$ be a positive parameter and let us consider the point $V_\alpha= (0,\alpha)\in \mathbb R^2$. Let us denote by $T_\alpha$ the isosceles triangle given by
  $
      T_\alpha = conv\{(-1,1), V_\alpha\}.
  $    
  First, let us stress that 
  $
      T_\alpha \subset \mathbb R^2\times (0,\alpha), 
  $    
  So that by the monotonicity of $\lambda^B_p(\cdot)$ with respect to the set inclusion,
  \begin{equation}\label{eq:lowboundstripe}
      \lambda_p^B(T_\alpha) \ge \lambda_p^B(\mathbb R\times (0,\alpha) ) \ge \lambda_p^0(\mathbb R\times (0,\alpha) ) = \bigg(\frac{\pi_p}{\alpha}\bigg)^p.
  \end{equation}
  Let us now prove a reverse-type inequality. 
  If we consider the rectangle   $Q_\alpha\subset T_\alpha$ given by
  \begin{equation*}
  Q_\alpha = (-(1-\sqrt{\alpha}),1-\sqrt{\alpha})\times (0,\alpha(1-\sqrt{\alpha})) = \alpha(1-\sqrt{\alpha}) \;[ (-\alpha^{-1},\alpha^{-1})\times(0,1)],
  \end{equation*}
  then by the monotonicity property with respect to the set inclusion and the scaling property~\eqref{eq:scalingproperty} we get
  \begin{equation}\label{eq:uppboundstripe}
      \lambda_p^B(T_\alpha)\le \lambda_p^B(Q_\alpha) = [\alpha(1-\sqrt{\alpha})]^{-p}\lambda_p^{B_\alpha}\bigg((-\alpha^{-1},\alpha^{-1})\times(0,1)\bigg),
  \end{equation}
  where $B_\alpha = \alpha^2(1-\sqrt{\alpha})^2B$. We know that 
  \begin{equation*}
      \lim_{B\to 0^+}\lambda_p^B(\Omega)= \lambda_p^0(\Omega).
  \end{equation*}
  Therefore, if we let $\alpha \to 0^+$, then $B_\alpha \to 0^+$, so that 
  \begin{equation}\label{eq:limiteigstripe}
      \lim_{\alpha \to 0^+}\lambda_p^{B_\alpha}\bigg((-\alpha^{-1},\alpha^{-1})\times(0,1)\bigg) = \lambda_p^0\bigg(\mathbb R\times(0,1)\bigg)= \pi_p^p.
  \end{equation}
  Therefore \eqref{eq:lowboundstripe}, \eqref{eq:uppboundstripe} and \eqref{eq:limiteigstripe} imply
  \begin{equation}\label{eq:lambdapBisosceles}
      \lim_{\alpha \to 0^+}\lambda_p^B(T_\alpha) \bigg(\frac{\pi_p}{\alpha}\bigg)^{-p}=1.   \end{equation}
      Let us recall that
      \begin{equation}\label{eq:inradiusisosceles}
          R_{T_\alpha} = \frac{\alpha}{1+\sqrt{1+\alpha^2}}.
      \end{equation}
      Equations \eqref{eq:lambdapBisosceles} and \eqref{eq:inradiusisosceles} imply
      \begin{equation*}
          \lim_{\alpha \to 0^+} \lambda_p^B(T_\alpha)R_{T_\alpha}^p =\bigg( \frac{\pi_p}{2}\bigg)^p. 
      \end{equation*}
      Moreover it is easy to verify that
      \begin{equation*}
          \frac{P(T_\alpha)R_{T_\alpha}}{\abs{T_\alpha}} = 2,
      \end{equation*}
      so that
      \begin{equation*}
          \lim_{\alpha \to 0^+} \frac{\lambda_p^B(T_\alpha)\abs{T_\alpha}^p}{P(T_\alpha)^p} = \lim_{\alpha \to 0^+}\lambda_p^B(T_\alpha)R_{T_\alpha}^p \cdot\frac{\abs{T_\alpha}^p}{P(T_\alpha)^pR_{T_\alpha}^p} = \bigg(\frac{\pi_p}{4}\bigg)^p,
      \end{equation*}
      proving the sharpness of the inequality.

\section{Open problems}\label{sec:5}

The results obtained in this paper naturally suggest several directions for further investigation.

\begin{enumerate}

\item 
In Theorem~\ref{thm:lower-bound-plane},
we proved for $p\ge 2$ the lower bound
\begin{equation*}
\lambda_p^B(\mathbb{R}^2)\ge \bigg(\frac{2}{p}\bigg)^\frac{p}{2}|B|^{\frac{p}{2}},
\end{equation*}
and we know that for $p=2$ the equality holds. 
We leave as an open problem whether 
there is actually an equality even if $p>2$. Furthermore, does there exist a minimiser in the variational
characterisation~\eqref{Rayleigh} of $\lambda_p^B(\mathbb{R}^2)$?
Recall that it is the case in the linear case $p=2$,
where $\lambda_2^B(\mathbb{R}^2)$ is actually an eigenvalue
of infinite multiplicity.

\item The technique used to prove the lower bound 
of Theorem~\ref{thm:lower-bound-plane}
cannot be applied to the case $p < 2$. This fact is visible in Proposition \ref{prop:low_bound_plane_regular}, where  H\"older inequality fails to apply. Moreover the hypothesis $p\ge 2$ is necessary when applying the dominated convergence theorem in the proof of Theorem~\ref{thm:lower-bound-plane}, 
when we want the integral $I_{2,\varepsilon}$ to go to zero. Therefore an open problem is to find a constant $C(p)>0$ such that for every $1\le p <2$, we get
\begin{equation*}
    \lambda_p^B(\mathbb R^2)\ge C(p)|B|^\frac{p}{2}.
\end{equation*}
Since the upper bound in \ref{cor:two-sided-plane} holds for any $p\ge 1$, the only information we know is that $C(p)\le \Gamma\big(\frac{p+2}{2}\big)$ and that $C(p)$ cannot be $(2/p)^\frac{p}{2}$, since for $p\in [1,2)$ we have that $\Gamma\big(\frac{p+2}{2}\big)< 1$. 

\item Does Theorem~\ref{thm:lower-bound-plane} hold 
in the case of a non-constant magnetic field?
Recall that it is the case in the linear case $p=2$

\item
In this paper, we typically assume $1<p<\infty$,
as usual for the $p$-Laplacian.
In Proposition~\ref{cor:two-sided-plane}, however,
we managed to include the limiting case $p=1$, too.
Inspired by the Cheeger problem,
it would be interesting to state a geometric version 
of the perimeter-over-area minimisation in the magnetic case, too. 
Note that the necessary theory of magnetic BV-functions 
was developed in~\cite{PSV2019}. 

\item
Interesting spectral-geometric question 
particularly arise in unbounded tubes, 
recently extended to the case of the $p$-Laplacian in~\cite{BK5}.
The diamagnetic effect of the magnetic field 
is quantified by Hardy-type inequalities in the linear case $p=2$,
see~\cite{KR}. The nonlinear case remains open.

\end{enumerate}

\saveformat
\appendix

\section*{Acknowledgements}
The authors were supported  by the grant no. 26-21940S
of the Czech Science Foundation. R.S. has been partially supported by GNAMPA group of INdAM.

\section*{Conflicts of interest and data availability statement}
The authors declare that there is no conflict of interest. Data sharing not applicable to this article as no datasets were generated or analysed during the current study.

\bibliographystyle{plain}
\bibliography{references02}
\Addresses
\end{document}